\newtheorem{theorem}{\bf Theorem}[section]
\newtheorem{corollary}{\bf Corollary}[section]
\newtheorem{lemma}{\bf Lemma}[section]
\newtheorem{remark}{\it Remark}[section]
\newtheorem{definition}{\bf Definition}[section]
\newcommand{\orcidauthorA}{0000-0001-7504-4444}
\newcommand{\orcidauthorB}{0000-0001-6859-7788}
\begin{document}

%%%% Article title to be placed here
\title[$q$-orthogonal polynomials and the $q$-Riemann Hilbert Problem]{On a class of $q$-orthogonal polynomials and the $q$-Riemann Hilbert Problem}

\author{%%%% Author details
Nalini Joshi$^{1}$, Tomas Lasic Latimer$^{2}$}

%%%%%%%%% Insert author address here
\address{$^{1}$School of Mathematics and Statistics F07, University of Sydney, Sydney NSW 2006, Australia\\
$^{2}$School of Mathematics and Statistics F07, University of Sydney, Sydney NSW 2006, Australia}

%%%% Subject entries to be placed here %%%%
\subject{Applied mathematics}

%%%% Keyword entries to be placed here %%%%
\keywords{Orthogonal polynomials, Riemann Hilbert Problem, $q$-difference calculus}

%%%% Insert corresponding author and its email address}
\corres{Nalini Joshi\\
\email{nalini.joshi@sydney.edu.au}}

%%%% Abstract text to be placed here %%%%%%%%%%%%
\begin{abstract}
We give an explicit solution of a $q$-Riemann Hilbert problem which arises in the theory of orthogonal polynomials, prove that it is unique, and deduce several properties. In particular we describe the asymptotic behaviour of zeroes in the limit as the degree of the polynomial approaches infinity.
\end{abstract}
%%%%%%%%%%%%%%%%%%%%%%%%%%%

%%%%%%%%%% Insert the texts which can accomdate on firstpage in the tag "fmtext" %%%%%

\begin{fmtext}
\section{Introduction}
In this paper, we consider a Riemann-Hilbert problem (RHP) associated with a general class of $q$-orthogonal polynomials and solve it explicitly. The result is used to deduce properties of such polynomials, in particular, ladder operators, a Lax pair, a $q$-difference equation and, most importantly, the asymptotic behaviour of their zeroes in the limit as the degree of the polynomials approach infinity.

RHPs encapsulate many classical problems of physics and mathematics. Related problems (under certain conditions) are also known as Wiener-Hopf problems. Applications include boundary value problems in hydrodynamics, diffraction theory and radiative transfer theory \cite{noble1962methods}. More recently, RHPs have been used to describe solutions of partial differential equations of Korteweg-de Vries type \cites{AC91,BDT88}, exactly solvable quantum field and statistical mechanics models \cite{KBI93}, topological and two-dimensional models of quantum gravity \cite{GM90}, and are particularly noteworthy as a method for describing asymptotic behaviours in the study of ensembles of random matrices \cites{D99,D00}. 
\end{fmtext}
%%%%%%%%%%%%%%% End of first page %%%%%%%%%%%%%%%%%%%%%

\maketitle
 Orthogonal polynomials are objects of longstanding interest in mathematics and physics. Classical orthogonality conditions involved continuous measures, with weight functions $w(x)$ defined on continuous domains.
The pioneering studies of Lagrange, Laguerre, Hermite and others focused on smooth weight functions $w(x)$ such that $w'(x)/w(x)$ grows no faster than a linear polynomial in $x$. This condition was extended in two cases by Shohat \cite{shohat1939}. Applications in theoretical physics have led to many more extensions studied in modern times \cites{maroni1987prolegomenes,fokas1991discrete,Magnus1999,AsscheBook}. These cases led to families of functions that are now known as \emph{semi-classical} orthogonal polynomials.

In this paper, we consider orthogonality conditions defined through a $q$-discrete measure, that is, for given $q\in\mathbb R$, $0<q<1$, orthogonality measures defined on the multiplicative lattice, $\{q^n\}_{n\in\mathbb Z}$. Given an appropriate weight function on such a lattice, we consider orthogonality conditions with respect to the Jackson integral. (We give the precise definitions in Section \ref{notation} below.) Such polynomials, called the $q$-Hahn class \cite[Section 18.27]{NIST:DLMF},  satisfy relations analogous to those satisfied by semi-classical polynomials but with the derivative operator replaced by the $q$-derivative $D_q$ (see Equation \eqref{q-div}). Known properties include ladder operators and associated pairs of compatible linear equations, called Lax pairs. We provide a new way to deduce such relations from the corresponding RHP.  

Not all properties of $q$-orthogonal polynomials are known. In this paper, we provide a new result, Theorem \ref{all zeroes}, leading to the asymptotic approximation of zeroes of $q$-orthogonal polynomials corresponding to a class of $q$-discrete measures. This leads to a generalisation of an earlier result found for polynomials orthogonal with respect to a positive discrete measure \cite[Theorem 6.1]{szeg1939orthogonal}.

\subsection{Background}\label{Background}
In 1991, Fokas, Its and Kitaev \cite{fokas1991discrete} showed how to formulate and solve RHPs for families of semi-classical orthogonal polynomials $\bigl\{n\in\mathbb N \bigm| P_{n}(x)\bigr\}$ with exponential weight functions $\exp(-V(x))$ on the real line, where $V$ is a polynomial of even degree and the coefficient of the highest-degree term is positive. Based on the RHP framework for orthogonal polynomials,  Deift \textit{et al.} \cite{Deift1999strong} developed a rigorous methodology to obtain the asymptotic behaviours of the corresponding orthogonal polynomials in the limit as the degree $n\to+\infty$. Their work was based on earlier advancements by Deift and Zhou on the steepest descent method for oscillatory RHPs  \cite{deift1993steepest}. Since then RHPs have been used to determine the asymptotic behaviour of a wide range of families of orthogonal polynomials.

Fokas {\it et al.} \cite{fokas1991discrete} also showed that the above orthogonal polynomials are related to discrete Painlev\'e equations through the coefficients of their 3-term recurrence relations. For a given measure $d\mu(x)$, monic orthogonal polynomials $\bigl\{ P_{n}(x)\bigr\}$ satisfy an orthogonality relation
\[ \int P_{n}(x)P_{m}(x) d\mu(x) = \gamma_{n}\delta_{n,m} ,\]
which defines $\gamma_{n}$. Their 3-term recurrence relation takes the form
\begin{equation}\label{recurrence}
    xP_{n}(x) = P_{n+1}(x)+b_{n}P_{n}(x)+a_{n}P_{n-1}(x) \,,
\end{equation}
where
\begin{eqnarray*}
a_{n} &=& \gamma_{n}/\gamma_{n-1} ,\\
b_{n} &=& \int xP_{n}(x)^{2} d\mu(x)/\gamma_{n}.
\end{eqnarray*}
For convenience, we take the sequence of polynomials to have initial values $P_{-1}=0$ and $P_{0} = 1$. When $V$ is quartic in $x$, actually $V(z)=-x^4/4$, it had earlier been shown \cites{shohat1939,freud1976coefficients} that $a_n$ satisfies the discrete equation
\begin{equation}
    a_{n}(a_{n+1}+a_{n}+a_{n-1})=n ,\label{dp1}
\end{equation}
Fokas {\it et al.} called it a discrete Painlev\'e equation, dPI, and proved that it has a continuum limit to the first Painlev\'e equation.
Equation \eqref{dp1} is also known as the string equation in physics due to its appearance in the Hermitian random matrix model of quantum gravity \cite{fokas1991discrete}. 

The connection between Equation \eqref{dp1} and dPI was established by considering the Lax pair satisfied by polynomials orthogonal with respect to $e^{-x^{4}/4}dx$. Semi-classical orthogonal polynomials satisfy a Lax pair given by a differential equation in the independent variable $x$ and an iteration in the degree $n$. Fokas \textit{et al.} demonstrated a connection between the Lax pair, the RHP formulation and dPI for this class of semi-classical orthogonal polynomials \cite{fokas1992isomonodromy}.

It is now known that many families of orthogonal polynomials have recurrence coefficients determined by additive-type discrete Painlev\'e equations \cites{Magnus1999,clarkson2018properties,clarkson2020generalised}. More recently, it has been shown that the recurrence coefficients of $q$-orthogonal polynomials satisfy multiplicative-type discrete Painlev\'e equations \cite{Boelen}, where now the non-autonomous term in the equation is iterated on multiplicative lattices. (For the terminology distinguishing types of discrete Painlev\'e equations, we refer to Sakai \cite{s:01}.)

Baik \textit{et al.} have described an interpolation problem \cite[Interpolation Problem 1.2]{baik2007discrete}, which can be considered as a discrete RHP for polynomials orthogonal with respect to a discrete point-wise measure. They use this setting to prove a number of asymptotic results, expanding on the approach of Deift \textit{et al.} Applications of such an approach include discrete polynomials that are, for example, orthogonal on the integer lattice. We note that its extension to the multiplicative or $q$-discrete case does not appear to lead to the results we find in this paper.

The gap in our knowledge of properties of $q$-orthogonal polynomials raises a number of questions relevant for physics, including quantum physics. Discrete $q$-Hermite I polynomials describe the dynamics of discrete quantum harmonic oscillators \cite{atakishiyev2008discrete}, little $q$-Jacobi polynomials describe the unitary representations of the quantum group $SU_{q}(2)$ \cite{masuda1991representations}, and $q$-Racah polynomials are used to describe quantum knot invariants \cite{chan2018orthogonal}. In quantum communication, perfect state transfer in linear spin chains is achieved when the spin chain data is related to the Jacobi matrix of $q$-Krawtchouk polynomials \cite{jafarov2010quantum}. We note that $q$-orthogonal polynomials also appear in other fields, for example the birth and death process \cite{sasaki2009exactly}. 

This paper demonstrates the connection between $q$-orthogonal polynomials, discrete $q$-Painlev\'e equations \cite{Boelen} and $q$-RHPs. Birkhoff \cite{Birkhoff1913} and Carmichael \cite{Carmichael} pioneered the study of $q$-RHPs, however their work was initially restricted to to fuchsian systems. Later, Adams extended their work beyond fuchsian systems \cite{Adams}. More recently, the application of Galois theory has been used to derive an analytic approach to singular regular linear $q$-difference systems \cite{sauloy2003galois}. Isomonodromy conditions have led to RHPs corresponding to $q$-difference Painlev\'e equations $q$-$P_{\rm{IV}}$ \cite{joshi2021riemann} and $q$-$P_{\rm{VI}}$ \cite{Yousuke2020} .

\subsection{Notation}\label{notation}
For completeness, we recall some well known definitions and notations from the calculus of $q$-differences. These definitions can be found in \cite{Ernst2012}. Throughout the paper we will assume $q \in \mathbb{R}$ and $0<q<1$.

\begin{definition}
We define the $q$-derivative $D_{q}$, Pochhammer symbol $(a;q)_{\infty}$, and Jackson integrals as follows.
\begin{enumerate}
\item The $q$-derivative, $D_{q}$, of $f(x)$ is defined as
\begin{equation}\label{q-div}
    D_{q}f(x) = \frac{f(x)-f(qx)}{x(1-q)} \,.
\end{equation}
Note that 
\[ D_{q}x^{n} = \frac{1-q^{n}}{1-q}x^{n-1} \,. \]

\item The Pochhammer symbol $(a;q)_{\infty}$ is defined as
\begin{equation*}
    (a;q)_{\infty} = \prod_{j=0}^{\infty}(1-aq^{j}) \,.
\end{equation*}

\item There are two Jackson integrals in the literature. The Jackson integral of $f(x)$ from 0 to 1 is defined as
\begin{subequations}\label{Jackson}
\begin{equation}
    \int_{0}^{1}f(x)d_{q}x = (1-q)\sum_{k=0}^{\infty} f(q^{k})q^{k} \,.
\end{equation}
Similarly, the Jackson integral of $f(x)$ from -1 to 1 is defined as
\begin{equation}
    \int_{-1}^{1}f(x)d_{q}x = (1-q)\sum_{k=0}^{\infty} (f(q^{k})+f(-q^{k}))q^{k} \,.
\end{equation}
\end{subequations}
\end{enumerate}
\end{definition}

Note that the results in this paper are applicable to both types of Jackson integrals. 
Typical examples of $q$-orthogonal polynomials with a Jackson integral from -1 to 1 include: discrete $q$-Hermite I polynomials (with weight $w(x) = (q^{2}x^{2};q^{2})_{\infty}$) \cite[Section 18.27]{NIST:DLMF} and $q$-Freud polynomials $\left(w(x) = x^{\kappa}(q^{4}x^{4};q^{4})_{\infty}\right)$ \cite{Boelen} . Examples of $q$-orthogonal polynomials with a Jackson integral from 0 to 1 include: little $q$-Jacobi $\left(w(x) = x^{\kappa}(qx;q)_{\infty}/(bqx;q)_{\infty}\right)$ and little $q$-Laguerre polynomials $\left(w(x) = x^{\kappa}(qx;q)_{\infty}\right)$ \cite{van2020zero} .

\subsection{Main results}
Our main results hold under certain properties defined below.
\begin{definition}\label{admissable}
A positively oriented Jordan curve $\Gamma$ in $\mathbb C$ with interior $\mathcal D_-\subset\mathbb C$ and exterior $\mathcal D_+\subset\mathbb C$ is called {\em appropriate} if 
\[
\pm q^m\in \begin{cases}
&\mathcal D_- \quad {\rm if}\, m\ge 0,\\
&\mathcal D_+ \quad {\rm if}\, m< 0.
\end{cases}
\]
We call $w$ {\em admissible} if it is a function analytic in $\mathcal D_-$ and $w(z)\in \mathbb{R}$ for $z\in \mathbb{R}$. Furthermore, we call $w$ {\em strictly positive admissible} if $w(z)>0$ at $z = \pm q^{m}$.

\end{definition}
An example of an appropriate curve is drawn in Figure \ref{RHP-figure}.
\begin{remark}
In Definition \ref{admissable} we describe the conditions needed for $q$-orthogonal polynomials with Jackson integral from -1 to 1. For a Jackson integral from 0 to 1 the same definition applies with $\pm q^{k} \rightarrow q^{k}$.
\end{remark}

\begin{definition}[$q$-Riemann-Hilbert problem]\label{qRHP}
Let $\Gamma$ be an appropriate curve (see Definition \ref{admissable}) with interior $\mathcal D_-$ and exterior $\mathcal D_+$. Let $w$ be an admissible function in $\mathcal D_-$ and define 
\[
h(s) = \sum_{k=-\infty}^{\infty} \frac{2sq^{k}}{s^{2}-q^{2k}}  = \sum_{k=-\infty}^{\infty} \left( \frac{q^{k}}{s-q^{k}} + \frac{q^{k}}{s+q^{k}} \right).
\]
A $2\times 2$ complex matrix function $Y(z)$, $z\in\mathbb C$, is called a \emph{solution of the $q$-Riemann-Hilbert problem} if it satisfies the following conditions:
    \begin{enumerate}[label={{\rm (\roman *)}}]
    \item $Y(z)$ is analytic on $\mathbb{C}\setminus \Gamma$.
\item $Y(z)$ has continuous boundary values $Y_-(s)$ and $Y_+(s)$ as $z$ approaches $s\in\Gamma$ from $\mathcal D_-$ and $\mathcal D_+$ respectively, where
\begin{gather} \label{b}
 Y_+(s) =
  Y_-(s)
  \begin{pmatrix}
   1 &
   w(s)h(s) \\
   0 &
   1 
   \end{pmatrix}, \; s\in \Gamma .
\end{gather}
\item $Y(z)$ satisfies
\begin{gather} \label{c}
Y(z)\begin{pmatrix}
   z^{-n} &
   0 \\
   0 &
   z^{n} 
   \end{pmatrix}
 =
I + O\left( \frac{1}{|z|} \right), \text{ as }\ |z| \rightarrow \infty.
\end{gather}
\end{enumerate}
\end{definition}
\begin{remark}
    Note that the function 
    \[
    h(z) = \sum_{k=-\infty}^{\infty} \frac{2zq^{k}}{z^{2}-q^{2k}}  = \sum_{k=-\infty}^{\infty} \left( \frac{q^{k}}{z-q^{k}} + \frac{q^{k}}{z+q^{k}} \right),
    \]
    is meromorphic and is not continuous at $z=0$ which is an accumulation point of the poles of $h(z)$. Furthermore, note that $h(qz) = h(z)$. Functions of this nature play an important role in $q$-Riemann Hilbert theory where the elements of the transition matrix ($(Y_-)^{-1}Y_+$ in Equation \eqref{b}) can be related to Weierstrass elliptic functions, see \cite[Section 19]{Birkhoff1913}.
\end{remark}
The main results of this paper are given by the following theorems.
\begin{theorem}[$q$-RHP] \label{RHPtheorem}
Let $P_{n}(z)$ be the monic $n^{th}$-degree orthogonal polynomial with respect to the admissible measure $w(x)d_{q}x$, i.e., 
\[ \int^{1}_{-1} P_{n}P_{m}w(x)d_{q}x = \gamma_{n}\delta_{n,m} .\]
Then $Y(z)$ defined by \\
\begin{gather} \label{RHP sol}
Y(z) 
=
\begin{bmatrix}
   P_{n}(z) &
   \oint_{\Gamma}\frac{P_{n}(s)w(s)h(s)}{2\pi i (z-s)}ds \\
   \gamma_{n-1}^{-1} P_{n-1}(z) &
   \oint_{\Gamma}\frac{P_{n-1}(s)w(s)h(s)}{ 2\pi i (z-s)\gamma_{n-1}}ds
   \end{bmatrix}
\end{gather}
is a unique solution of the Riemann-Hilbert problem given in Definition \eqref{qRHP}.
\end{theorem}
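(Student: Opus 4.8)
The proof splits naturally into two parts: verifying that the explicit matrix $Y(z)$ in \eqref{RHP sol} satisfies conditions (i)--(iii), and then establishing uniqueness. Let me think about each.

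For existence, the key observation is that $w(s)h(s)$ is the relevant weight, and the off-diagonal entries of $Y$ are Cauchy-type transforms of this weight against the orthogonal polynomials. Condition (i) is essentially free: the polynomial entries are entire, and each Cauchy transform $\oint_\Gamma \frac{f(s)}{2\pi i(z-s)}\,ds$ is analytic off $\Gamma$ by standard properties of Cauchy integrals. For (ii), the jump across $\Gamma$, I would invoke the Plemelj--Sokhotski formula: the boundary values of a Cauchy transform differ exactly by the density $f(s)$, so the first-column entries (being polynomials) have no jump while the second-column entries jump by $P_n(s)w(s)h(s)$ and $\gamma_{n-1}^{-1}P_{n-1}(s)w(s)h(s)$ respectively — which is precisely the action of the upper-triangular jump matrix in \eqref{b}.

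**The main obstacle** will be condition (iii), the normalization at infinity, and this is where orthogonality and the precise definition of $h$ enter. Expanding the Cauchy kernel $\frac{1}{z-s} = \sum_{j\ge 0} s^j/z^{j+1}$ for large $z$, the $(1,2)$ entry of $Y$ behaves like $\sum_j z^{-j-1}\oint_\Gamma \frac{P_n(s)w(s)h(s)s^j}{2\pi i}\,ds$. The crux is to identify these contour moments $\oint_\Gamma P_n(s)w(s)h(s)s^j\,ds$ with the Jackson-integral orthogonality moments $\int_{-1}^1 P_n(x)x^j w(x)\,d_q x$. This is exactly what the function $h$ is engineered to do: since $h(s)=\sum_k\left(\frac{q^k}{s-q^k}+\frac{q^k}{s+q^k}\right)$ has simple poles at $s=\pm q^k$ with residue $q^k$, a residue computation collapsing the contour onto the lattice points $\pm q^k$ inside $\mathcal D_-$ should convert $\frac{1}{2\pi i}\oint_\Gamma g(s)h(s)\,ds$ into $\sum_{k\ge 0}(g(q^k)+g(-q^k))q^k$, i.e.\ (up to the factor $1-q$) the Jackson integral. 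Granting this, orthogonality $\int_{-1}^1 P_n x^j w\,d_q x = 0$ for $j<n$ forces the $(1,2)$ entry to be $O(z^{-n-1})$; multiplying by $z^n$ in \eqref{c} gives $O(1/z)$ as required. The diagonal entries are handled by the monic leading behaviour $P_n(z)=z^n(1+O(1/z))$, and the lower row uses the analogous moment vanishing for $P_{n-1}$ together with the normalizing constant $\gamma_{n-1}^{-1}$ to produce the correct leading constant in the $(2,2)$ entry. I would need to be careful that the interchange of the infinite sum in $h$ with the contour integral and the large-$z$ expansion is justified — the accumulation of poles at $s=0$ (flagged in the remark) means the series for $h$ converges only away from $0$, so the contour $\Gamma$ must stay bounded away from the origin, which is guaranteed by appropriateness.

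For uniqueness, I would use the standard RHP argument. Suppose $\widetilde Y$ is another solution and set $R(z)=Y(z)\widetilde Y(z)^{-1}$. Since the jump matrix in \eqref{b} is unimodular, $\det Y$ has no jump across $\Gamma$ and is entire; by \eqref{c} it tends to $1$ at infinity, so $\det Y\equiv 1$ by Liouville, and in particular $\widetilde Y^{-1}$ exists. Both $Y$ and $\widetilde Y$ share the same jump, so $R$ has no jump and extends to an entire matrix function; the normalization \eqref{c} shows $R(z)\to I$ as $|z|\to\infty$, whence $R\equiv I$ by Liouville applied entrywise. The only subtlety is confirming that the boundary values are continuous (hypothesis (ii)) so that the no-jump conclusion genuinely gives analyticity across $\Gamma$ via Morera, rather than merely a distributional statement; this is exactly the continuity built into the problem's formulation.
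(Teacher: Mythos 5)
Your proposal is correct and takes essentially the same route as the paper: both rest on the Cauchy-transform/Plemelj solution of the scalar additive jump, the residue-theorem collapse of $\oint_\Gamma(\cdot)\,h(s)\,ds$ onto the lattice points $\pm q^{k}$ interior to $\Gamma$ to recover the Jackson integral, the geometric-series expansion combined with orthogonality to force the decay in condition (iii), and the unimodular-determinant/Liouville argument for uniqueness. The only difference is organizational --- the paper deduces what each entry of a solution must be, whereas you verify the stated formula directly --- but the computations involved are identical (and your explicit flagging of the factor $1-q$ between the residue sum and the Jackson integral is, if anything, slightly more careful than the paper's own treatment).
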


\begin{theorem}\label{all zeroes}
    Let $P_{n}(x)$ be the monic $n^{\rm th}$-degree $q$-orthogonal polynomial with respect to the admissible weight $w(x)$ on $[-1,0)\cup(0,1]$, i.e., 
\[ \int^{1}_{-1} P_{n}P_{m}w(x)d_{q}x = \gamma_{n}\delta_{n,m} .\]
 Denote the zeroes of $P_n(x)$ by $\{x_{i,n}\}_{i=1}^n$. Suppose that the recurrence coefficients defined in Equation \eqref{recurrence} decay sufficiently fast such that
 \[ \sum_{n=0}^{\infty}|a_n|+|b_n| < \infty .\]
 Then , for any $k \in \mathbb{N}$ there exists $1\le i\le n$ such that $x_{i,n}\rightarrow q^{k}$ as $n \rightarrow \infty$.
\end{theorem}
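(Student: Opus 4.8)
The plan is to realise the zeros $\{x_{i,n}\}_{i=1}^{n}$ as the nodes of the Gaussian quadrature rule attached to the orthogonality measure $d\mu(x)=w(x)\,d_qx$, to show that the resulting quadrature measures converge weakly to $\mu$, and then to exploit the fact that $\mu$ carries a positive point mass at each $q^{k}$: weak convergence forces a node to be trapped near $q^{k}$.

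First I would record the standard consequences of orthogonality with respect to the \emph{positive} measure $\mu$. Here I use that $w$ is strictly positive admissible in the sense of Definition \ref{admissable}, so that $\mu$ is supported on the compact set $\{\pm q^{m}:m\ge 0\}\cup\{0\}\subset[-1,1]$ and assigns the mass $\mu_{k}:=(1-q)q^{k}w(q^{k})>0$ to each node $q^{k}$. Consequently the zeros $x_{i,n}$ are real, simple and lie in $(-1,1)$, and the associated Christoffel numbers $\lambda_{i,n}>0$ furnish an $n$-point rule $\int f\,d\mu=\sum_{i=1}^{n}\lambda_{i,n}f(x_{i,n})$ that is exact for every polynomial $f$ of degree at most $2n-1$. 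I then set $\mu_{n}:=\sum_{i=1}^{n}\lambda_{i,n}\,\delta_{x_{i,n}}$.

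Next I would prove $\mu_{n}\to\mu$ in the weak-$*$ topology. Fix a monomial $x^{j}$; for all $n>j/2$ exactness gives $\int x^{j}\,d\mu_{n}=\int x^{j}\,d\mu$, so every fixed moment of $\mu_{n}$ is eventually that of $\mu$. Since the $\mu_{n}$ share the total mass $\mu([-1,1])$ and are supported in $[-1,1]$, any weak-$*$ limit point is a measure on $[-1,1]$ with the same moments as $\mu$; as a compactly supported measure is determined by its moments, every limit point equals $\mu$, whence $\mu_{n}\to\mu$. Now fix $k$ and pick $\varepsilon>0$ so small that $U=(q^{k}-\varepsilon,q^{k}+\varepsilon)$ meets the support of $\mu$ only at $q^{k}$. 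The portmanteau theorem gives $\liminf_{n}\mu_{n}(U)\ge\mu(U)=\mu_{k}>0$, so for all large $n$ the interval $U$ contains a node $x_{i,n}$. Choosing for each $1/j$ a threshold $N_{j}$ beyond which some node lies in $(q^{k}-1/j,q^{k}+1/j)$ yields an index $i=i(n)$ with $x_{i,n}\to q^{k}$, which is the assertion.

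The step I expect to be the crux is the weak-$*$ convergence $\mu_{n}\to\mu$, and in particular the behaviour near the accumulation point $0$, where the atoms of $\mu$ cluster; it is the compactness of the support that secures both tightness and the determinacy of the moment problem and so makes the moment computation conclusive. I would also point out that the decay hypothesis $\sum_{n}(|a_{n}|+|b_{n}|)<\infty$ renders the symmetric Jacobi matrix assembled from the coefficients in \eqref{recurrence} compact, so that its spectrum is exactly $\{\pm q^{m}:m\ge 0\}\cup\{0\}$ with each $\pm q^{m}$ an isolated eigenvalue carrying an $\ell^{2}$ eigenvector. This supplies an equivalent operator-theoretic route: truncating the eigenvector at $q^{k}$ produces an approximate eigenvector of the $n\times n$ truncation $J_{n}$ (whose eigenvalues are precisely the $x_{i,n}$), from which a zero converging to $q^{k}$ is read off; it moreover shows that no zeros can accumulate anywhere outside $\{\pm q^{m}\}\cup\{0\}$.
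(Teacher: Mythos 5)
There is a genuine gap: your proof adds a hypothesis that the theorem does not contain. You write ``Here I use that $w$ is strictly positive admissible,'' but Theorem \ref{all zeroes} assumes only an \emph{admissible} weight (analytic in $\mathcal D_-$, real on $\mathbb{R}$) together with the decay condition $\sum_n(|a_n|+|b_n|)<\infty$; the remark immediately following the theorem stresses that it ``holds for $q$-orthogonal polynomials which may have non-positive admissible weights,'' and the conclusion of the paper presents precisely this relaxation of positivity as the advance over Szeg\H{o}'s classical result for positive discrete measures. Once positivity is dropped, every pillar of your argument collapses: $d\mu=w\,d_qx$ is only a signed measure, so the Christoffel numbers $\lambda_{i,n}$ need not be positive (nor real in a useful sense), the zeros $x_{i,n}$ need not be real or lie in $[-1,1]$, the measures $\mu_n$ are not positive so Helly compactness and the portmanteau inequality $\liminf_n\mu_n(U)\ge\mu(U)$ are unavailable, and determinacy-by-moments no longer pins down a signed limit. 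The operator-theoretic aside fails for the same reason: $a_n=\gamma_n/\gamma_{n-1}$ can be negative, so the symmetrized Jacobi matrix with off-diagonal entries $\sqrt{a_n}$ is not self-adjoint and its spectrum need not be $\{\pm q^m\}\cup\{0\}$. A telltale sign of the mismatch is that your quadrature route never actually uses the hypothesis $\sum_n(|a_n|+|b_n|)<\infty$ --- you are proving Szeg\H{o}'s theorem for positive discrete measures, not the theorem stated here.

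The paper's proof runs through the decay hypothesis in an essential way: Lemma \ref{convergence} shows that $z^{-n}$ times each column of the RHP solution converges uniformly on compact sets avoiding $0$ (and the poles), so one may pass to the limit in the scalar jump relation $z^{-n}P_nhw+z^{-n}\pi_-=z^{-n}\pi_+$; a Liouville argument kills the limit of the right-hand side, leaving $f^{(0)}hw+f^{(1)}=0$ with $f^{(1)}$ analytic off the origin, which forces $f^{(0)}=\lim z^{-n}P_n$ to vanish at the poles $\pm q^k$ of $h$; a Hurwitz-type argument then transfers these zeros of the limit to zeros of $P_n$. None of this needs $w>0$; positivity enters only later (Lemmas \ref{z spacing} and \ref{2.3}) to verify that strictly positive weights satisfy the decay hypothesis. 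If you want to salvage your approach, you should either restrict your claim to the strictly positive case and present it as a proof of the weaker, classical statement, or find a substitute for weak-$*$ convergence of positive measures that survives for signed $w$ --- which is exactly what the paper's analytic argument accomplishes.
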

\begin{remark}
Theorem \ref{all zeroes} holds for $q$-orthogonal polynomials which may have non-positive admissible weights.
\end{remark}
\begin{remark}\label{r2.9}
Lemma \ref{2.3} demonstrates that $q$-orthogonal polynomials with strictly positive admissible weights satisfy Theorem \ref{all zeroes}. Furthermore, we will show in Lemma \ref{convergence} that there exists an upper bound on the rate of convergence proportional to $M_{1}z^{-2}+M_{2}z^{-1}$.
\end{remark}

 We use these results to prove a number of properties satisfied by $q$-orthogonal polynomials. From Theorem \ref{RHPtheorem}, we deduce the ladder operators as well as the Lax pair satisfied by $q$-orthogonal polynomials. In particular, we show that the solution of the $q$-RHP solves a $q$-difference equation in the independent variable $z$.

\subsection{Outline}
The paper is structured as follows. In Section \ref{method} we prove Theorems \ref{RHPtheorem} and \ref{all zeroes}. We begin by solving the $q$-RHP in Section \ref{RHP section}. Using the $q$-RHP formalism, in Sections \ref{Lax Pair section} and \ref{Ladder operator section} we deduce the Lax pair and ladder operators respectively. In Section \ref{Zero section} we describe the asymptotic location of zeroes using the $q$-RHP. We then conclude with Section \ref{conc sec} where we discuss future work and summarise our results.

\begin{figure}
\centering
\begin{tikzpicture}
\draw[latex-latex] (-5,0) -- (5,0);
\draw[latex-latex] (0,-4) -- (0,4);
\draw (0,4) node[left] {\large $\Im(z)$};
\draw (5,0) node[above] {\large $\Re(z)$};

\foreach \i in {-0.0625,-0.125,-0.25,-0.5, -1, -2,-4,0,0.0625,0.125,0.25,0.5, 1, 2,4}
\fill[black] (\i,0) circle (0.4 mm);

%Label some points
\draw (-2,0) node[above] {$-1$};
\draw (2,0) node[above] {$1$};
\draw (-4,0) node[above] {\small $-q^{-1}$};
\draw (4,0) node[above] {\small $q^{-1}$};
\draw (-1,0) node[above] {\small $-q$};
\draw (1,0) node[above] {\small $\,q$};

\draw[thick,dashed] (0,0) circle (3cm);
\draw (1,1) node[above] {\large $Y_{-}$};
\draw (2.5,2.5) node[above] {\large $Y_{+}$};
%\draw [->] (-2.5,2.5) -- (-2.12,2.12);
\draw (-2.4,2.4) node[above] {\large $\Gamma$};

\end{tikzpicture}  
\caption{Example of an appropriate curve $\Gamma$ defined by Definition \ref{admissable}.}
\label{RHP-figure}
\end{figure}
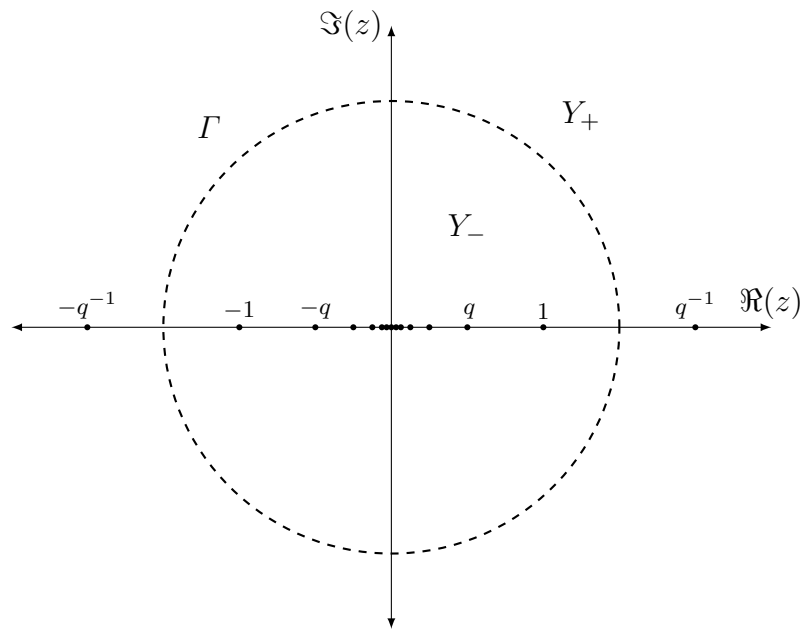

\section{Proofs of main results}\label{method}
In this section, we prove Theorems \ref{RHPtheorem} and \ref{all zeroes} and provide additional results that lead to ladder operators and a Lax pair. As remarked earlier, we focus on the case of the Jackson integral from -1 to 1 (see Equation \eqref{Jackson}). The results for the case of a Jackson integral from 0 to 1 follow using similar arguments.

\subsection{Proof of Theorem \ref{RHPtheorem}}\label{RHP section}

The proof follows along the same lines as in the case of the RHP for semi-classical orthogonal polynomials given by Fokas \textit{et al.} (see \cite[Section 3]{Deift1999strong} for a detailed proof). When moving from the semi-classical case to $q$-orthogonal polynomials a key step relies on the use of the Cauchy residue theorem to express a contour integral as a discrete sum. 
\begin{proof}
We show that the second row of $Y$ must be given by Equation \eqref{RHP sol}. A similar argument can be carried out for the first  row.   \\ \\
It follows from the asymptotic condition, Equation \eqref{c}, that the $(1,1)$ entry of $Y$ must have leading order $z^{n}$ as $z\rightarrow \infty$. As $Y_{(1,1)}$ is analytic and its jump condition, Equation \eqref{b}, is given by the identity we immediately conclude that $Y_{(1,1)}$ is a monic polynomial of degree $n$. Similarly, it follows that $Y_{(2,1)}$ is a polynomial of degree at most $n-1$. We denote $Y_{(2,1)}$ by $Q_{n-1}$.  \\ \\
Consider the bottom right entry of Equation \eqref{RHP sol}. By the jump condition, Equation \eqref{b}, we have
\begin{equation}\label{1d jump}
Y_{(2,2)}^{+} = Q_{n-1}(s)w(s)h(s) + Y_{(2,2)}^{-} \,. \end{equation}
This scalar equation is solved by the Cauchy transform
\begin{equation}\label{y22 solved}
    Y_{(2,2)}(z) = \frac{1}{2\pi i}\oint_{\Gamma}\frac{Q_{n-1}(s)w(s)h(s)}{z-s}ds \,,
\end{equation} 
which is analytic in $\mathbb{C}\setminus \Gamma$ and satisfies Equation \eqref{1d jump}. \\ \\
The only step remaining is to prove the asymptotic condition, Equation \eqref{c}, for $Y_{(2,2)}$. Substituting our expression for $h(s)$ into Equation \eqref{y22 solved}, we find
\[ Y_{(2,2)}(z) = \frac{1}{2\pi i}\oint_{\Gamma}\sum_{k=-\infty}^{\infty} \left( \frac{q^{k}}{s-q^{k}}\frac{Q_{n-1}(s)w(s)}{z-s} + \frac{q^{k}}{s+q^{k}}\frac{Q_{n-1}(s)w(s)}{z-s} \right) ds \,. \]
which by Cauchy's integral formula, for $z\in \text{ext}(\Gamma)$, becomes
\begin{eqnarray*}
Y_{(2,2)}(z) &=& \sum_{k=0}^{\infty} \left( q^{k}\frac{Q_{n-1}(q^{k})w(q^{k})}{z-q^{k}} + q^{k}\frac{Q_{n-1}(-q^{k})w(-q^{k})}{z+q^{k}} \right) \,, \\
&=& \int^{1}_{-1} \frac{Q_{n-1}(x)w(x)}{z-x} d_{q}x \,,
\end{eqnarray*}
where the sum to infinity is well defined on $\Gamma$, as $h(s)$ converges as $k\rightarrow \infty$, and the Jackson integral of an analytic function is well defined. Note that only the points inside the jump curve $\Gamma$ contribute to the integral. \\ \\
Using the geometric series with remainder
\begin{equation}\label{exp}
    \frac{1}{z-x} = \sum^{l}_{k=0}\left( \frac{x^{k}}{z^{k+1}} \right) + \frac{x^{l+1}}{z^{l+1}(z-x)}\,, \quad \text{for}\,x\neq z \,,
\end{equation}
we find
\begin{eqnarray*}
Y_{(2,2)}(z) &=& \int^{1}_{-1} \frac{Q_{n-1}(x)w(x)x^{n}}{z^{n}(z-x)} d_{q}x \\
&& + \sum^{n-1}_{k=0} \frac{1}{z^{k+1}}\int^{1}_{-1} Q_{n-1}(x)w(x)x^{k} d_{q}x  \,.
\end{eqnarray*} 
Note that the asymptotic condition, Equation \eqref{c}, holds when the last term on the RHS is zero for $k = 0,1,2,...,n-2$. This is true iff 
\[ \int^{1}_{-1} Q_{n-1}(x)w(x)x^{k} d_{q}x  = 0\,, \quad \text{for} \, k\leq n-2 \,, \]
which is satisfied when $Q_{n-1}$ is an orthogonal polynomial of degree $n-1$ on the $q$-lattice with respect to the weight $w(x)$. We conclude the solution of $Y_{(2,2)}(z)$ is given by
\[
    Y_{(2,2)}(z) = \left\{\begin{array}{lr}
       \frac{1}{\gamma_{n-1}}\int^{1}_{-1} \frac{P_{n-1}(x)w(x)}{z-x} d_{q}x -\frac{1}{\gamma_{n-1}} P_{n-1}(z)w(z)h(z), & \text{for } z \in \text{int}(\Gamma),\\
        \frac{1}{\gamma_{n-1}}\int^{1}_{-1} \frac{P_{n-1}(x)w(x)}{z-x} d_{q}x, & \text{for } z \in \text{ext}(\Gamma).
        \end{array}\right.
\]
After appropriate scaling, and repeating the same arguments for the first row, it follows that Equation \eqref{RHP sol} is a solution of the $q$-RHP given by Definition \ref{qRHP}.\\ \\
Uniqueness of this solution follows from consideration of the determinant. Observe that the jump matrix $J = Y_{-}^{-1}Y_{+}$ satisfies $\text{det}(J) = 1$. It immediately follows that $\text{det}(Y^{+}) = \text{det}(Y^{-})$ on $\Gamma$. Thus, $\text{det}(Y)$ is an entire function. By the asymptotic condition, Equation \eqref{c}, $\text{det}(Y) \rightarrow 1$, hence by Liouville's theorem $\text{det}(Y) = 1$. As $\text{det}(Y) = 1$ we can deduce $Y^{-1}$ exists and is analytic in $\mathbb{C}/\Gamma$. Suppose that there exists a second solution to the $q$-RHP, let us denote this solution by $\widehat{Y}$. If we define $M = \widehat{Y}Y^{-1}$, it follows that the jump conditions effectively cancel and $M_{+} = M_{-}$. Thus, $M$ is entire and $M \rightarrow I$ as $z\rightarrow \infty$. Hence, by Liouville's theorem $M = I$. We conclude that $\widehat{Y} = Y$ and there is a single unique solution to the $q$-RHP.
\end{proof}

\subsection{Lax Pair}\label{Lax Pair section}
In this section, we deduce a $q$-difference Lax pair for $q$-orthogonal polynomials from the $q$-RHP. The Lax pair for a solution $Y_{n}$ to the $q$-RHP can be written as:
\begin{eqnarray}
\Psi_{n+1}(z) &=& C_{n}(z)\Psi_{n}(z) \,, \label{nlax1}\\
\Psi_{n}(qz) &=& D_{n}(z)\Psi_{n}(z)\,, \label{qlax1}
\end{eqnarray}
where,
\begin{gather} \nonumber
\Psi_{n}(z) = 
 Y_{n}(z) \begin{bmatrix}
   w(z) &
   0 \\
   0 &
   1
   \end{bmatrix} \,.
\end{gather}
and, $C_{n}$ and $D_{n}$ both have polynomial entries. We first look at an iteration in $n$. The proof mirrors that of the semi-classical case.
\begin{corollary}\label{2.1}
Let $Y_{n}(z)$ be a solution to the $q$-RHP defined in Theorem \ref{RHPtheorem}. Then $Y_{n}$ satisfies Equation \eqref{nlax1}, where
\begin{gather} \nonumber
C_{n}(z) =
\begin{bmatrix}
z+c_{n,1} &
c_{n,2} \\
c_{n,3} &
0
\end{bmatrix} \,,
\end{gather}
and $c_{n,1}$, $c_{n,2}$ and $c_{n,3}$ are constants dependent on $n$.
\end{corollary}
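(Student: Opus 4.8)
The plan is to construct $C_n$ directly from the two solutions $Y_n$ and $Y_{n+1}$ of the $q$-RHP and to exploit the fact that the jump matrix in Equation \eqref{b} does not depend on $n$. With the substitution $\Psi_n = Y_n\,\mathrm{diag}(w(z),1)$, the weight factors cancel in the product $\Psi_{n+1}\Psi_n^{-1}$, so that
\[
C_n(z) = \Psi_{n+1}(z)\Psi_n(z)^{-1} = Y_{n+1}(z)\,Y_n(z)^{-1}.
\]
First I would establish that $C_n$ is entire. Since $\det Y_n = 1$ (shown in the proof of Theorem \ref{RHPtheorem}), the inverse $Y_n^{-1}$ exists and is analytic on $\mathbb{C}\setminus\Gamma$, hence so is $C_n$. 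Across $\Gamma$ both factors obey the same jump with the common matrix $J$ of Equation \eqref{b}, so that $C_n^+ = Y_{n+1}^-\,J J^{-1}\,(Y_n^-)^{-1} = Y_{n+1}^-(Y_n^-)^{-1} = C_n^-$; thus $C_n$ has no jump on $\Gamma$ and extends to an entire function.

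Next I would read off the form of $C_n$ from its behaviour at infinity. Using the normalisation in Equation \eqref{c}, I would expand
\[
Y_n(z) = \Bigl(I + \tfrac{m_1^{(n)}}{z} + \tfrac{m_2^{(n)}}{z^2} + \cdots\Bigr)\begin{pmatrix} z^n & 0 \\ 0 & z^{-n}\end{pmatrix},
\]
and similarly for $Y_{n+1}$. Substituting into $C_n = Y_{n+1}Y_n^{-1}$, the diagonal powers combine to $\mathrm{diag}(z,z^{-1})$, and I would then collect the terms carrying nonnegative powers of $z$. A short computation shows that the only surviving such terms are a single linear entry in position $(1,1)$, constant entries in positions $(1,2)$ and $(2,1)$, and exactly zero in position $(2,2)$, with all remaining contributions of order $O(1/z)$.

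Finally, since $C_n$ is entire, subtracting this polynomial part produces an entire function that decays as $z\to\infty$ and hence vanishes by Liouville's theorem. Therefore $C_n$ equals its polynomial part and has the stated form, with $c_{n,1}$, $c_{n,2}$, $c_{n,3}$ expressible through the expansion coefficients $m_1^{(n)}$ and $m_1^{(n+1)}$. I expect the main obstacle to be the bookkeeping in the asymptotic expansion, in particular verifying that the $(2,2)$ entry vanishes identically and that the off-diagonal entries remain constant rather than growing, since this is precisely where the structure of $C_n$ is forced.
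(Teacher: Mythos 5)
Your proposal is correct and follows essentially the same route as the paper: the jump matrix's independence of $n$ makes $C_n = Y_{n+1}Y_n^{-1}$ entire, and the asymptotic condition \eqref{c} together with Liouville's theorem forces the stated linear-plus-constant form. The extra details you supply (the cancellation $\Psi_{n+1}\Psi_n^{-1} = Y_{n+1}Y_n^{-1}$, $\det Y_n = 1$, and the expansion bookkeeping showing the $(2,2)$ entry is $O(1/z)$) are exactly what the paper's terse proof leaves implicit, and your identification of the constants via $m_1^{(n)}$, $m_1^{(n+1)}$ is equivalent to the paper's remark that they follow from the recurrence relation.
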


\begin{proof}
We observe that the jump condition for the $q$-RHP does not depend on $n$. Thus $Y_{n+1}Y_{n}^{-1}$ is entire. By considering the asymptotic condition, Equation \eqref{c}, and using Liouville's Theorem we find
\begin{gather}\nonumber
Y_{n+1}Y_{n}^{-1} = 
\begin{bmatrix}
z + c_{n,1} &
c_{n,2} \\
c_{n,3} &
0
\end{bmatrix} \,,
\end{gather}
where $c_{n,1}$, $c_{n,2}$ and $c_{n,3}$ are constants which can be explicitly derived using the recurrence relation Equation \eqref{recurrence}.
\end{proof}
We now show that if the weight $w(z)$ satisfies a $q$-difference equation of a certain form then $Y_{n}$ satisfies Equation \eqref{qlax1}. The condition on $w(z)$ is the $q$ analogue of the Pearson relation satisfied by semi-classical weights \cite[Section 1.1.1.]{AsscheBook}.
\begin{corollary}\label{qlax lemma}
Let $Y_{n}(z)$ be a solution to the $q$-RHP defined in Definition \ref{qRHP} with admissible weight $w(z)$ such that $w(\pm q^{-1}) = 0$. Furthermore, suppose that $w(z)$ is admissible for the Jordan curve $\widetilde{\Gamma}(z) = \Gamma(q^{-1}z)$ (i.e. it is the initial curve scaled by $q^{-1}$). Define,
\begin{gather} \nonumber
\Psi_{n}(z) = 
 Y_{n}(z) \begin{bmatrix}
   w(z) &
   0 \\
   0 &
   1
   \end{bmatrix} \,.
\end{gather}
If $w(qz)/w(z)$ is a polynomial, $\tau(z)$, of degree $m$. Then
\begin{gather} \nonumber
\Psi_{n}(qz) = 
 D_{n}(z)\Psi_{n}(z)  \,,
\end{gather}
where $D_{n}(z)$ has polynomial entries with maximum degree $m$.
\end{corollary}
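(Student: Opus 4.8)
The plan is to mirror the proof of Corollary \ref{2.1}: I would form the matrix ratio $D_n(z) = \Psi_n(qz)\Psi_n(z)^{-1}$, show that it extends to an entire function, and then pin down its form by a Liouville argument together with the asymptotics at infinity. Writing $\Psi_n(z) = Y_n(z)\,\mathrm{diag}(w(z),1)$ gives at once
\[
D_n(z) = Y_n(qz)\begin{bmatrix} \tau(z) & 0 \\ 0 & 1\end{bmatrix} Y_n(z)^{-1}, \qquad \det D_n(z) = \frac{w(qz)}{w(z)} = \tau(z),
\]
where I have used $\det Y_n \equiv 1$ from Theorem \ref{RHPtheorem}. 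Thus $\det D_n$ is already a polynomial of degree $m$, which is consistent with the claimed degree bound and will help control the entries.

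The first and crucial step is to check that the jumps cancel. In Corollary \ref{2.1} this was automatic because $Y_{n+1}$ and $Y_n$ share one and the same $n$-independent jump across $\Gamma$. Here the two factors $Y_n(qz)$ and $Y_n(z)$ jump across \emph{different} contours — namely $\widetilde\Gamma$ (where $qz\in\Gamma$) and $\Gamma$ respectively — so the cancellation is no longer immediate, and this is exactly where the hypotheses enter. I would exploit the two structural identities $h(qz)=h(z)$ (the periodicity noted in the Remark following Definition \ref{qRHP}) and the $q$-Pearson relation $w(qz)=\tau(z)w(z)$. Denoting by $Y_n^{\mathrm{in}}$ and $Y_n^{\mathrm{out}}$ the analytic continuations of $Y_n$ from $\mathcal D_-$ and $\mathcal D_+$, which by Equation \eqref{b} differ by the upper-triangular unipotent factor with $(1,2)$ entry $w(s)h(s)$, a direct computation gives
\[
Y_n^{\mathrm{out}}(qz)\begin{bmatrix}\tau & 0\\0&1\end{bmatrix}Y_n^{\mathrm{out}}(z)^{-1} = Y_n^{\mathrm{in}}(qz)\begin{bmatrix}\tau & 0\\0&1\end{bmatrix}Y_n^{\mathrm{in}}(z)^{-1},
\]
because the inserted unipotent factors collapse: the resulting off-diagonal term is $-\tau(z)w(z)h(z)+w(qz)h(qz)$, which vanishes identically by the two identities above. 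Hence $D_n$ is a single-valued meromorphic function with no jump across either $\Gamma$ or $\widetilde\Gamma$.

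The main obstacle is then to rule out poles. The only candidate singularities of $D_n$ are the lattice points $\pm q^{k}$, carried by the second column of $Y_n$ through the poles of $h$, together with their accumulation point $z=0$. This is precisely where the two extra hypotheses are used. Admissibility of $w$ on $\widetilde\Gamma$ ensures $w$ is analytic in the enlarged region bounded by $\widetilde\Gamma$, so no new singularities are introduced between $\Gamma$ and $\widetilde\Gamma$; and the condition $w(\pm q^{-1})=0$ annihilates the residues of $w\,h$ at the poles $\pm q^{-1}$ of $h$ lying in that annulus, which are exactly the poles that the shift $z\mapsto qz$ first brings into play. I would check, using the explicit second-column formula from the proof of Theorem \ref{RHPtheorem} and the residues of $h$, that the contributions at each $\pm q^{k}$ cancel between the factors $Y_n(qz)$ and $Y_n(z)^{-1}$, while the convergence of $h$ as $k\to\infty$ controls the behaviour near the origin and shows $z=0$ is removable. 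This residue bookkeeping across the two contours, rather than any single identity, is where I expect the real work to lie.

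Finally, with $D_n$ entire, I would read off its growth from the normalisation \eqref{c}: since $Y_n(z)\,\mathrm{diag}(z^{-n},z^{n}) = I + O(1/z)$, substitution yields $D_n(z) = \mathrm{diag}(q^{n}\tau(z),\,q^{-n}) + O(z^{m-1})$ as $z\to\infty$, so every entry grows at most like $z^{m}$. A matrix Liouville argument (entire plus polynomial growth of degree at most $m$) then forces $D_n(z)$ to have polynomial entries of degree at most $m$, which is the assertion; if explicit expressions are wanted, the leading coefficients can be matched against the recurrence coefficients of Equation \eqref{recurrence} exactly as in Corollary \ref{2.1}.
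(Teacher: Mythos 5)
Your proposal follows essentially the same route as the paper's proof: define $D_n=\Psi_n(qz)\Psi_n(z)^{-1}$, rewrite it as $Y_n(qz)\,\mathrm{diag}(\tau(z),1)\,Y_n(z)^{-1}$ to neutralise the zeroes of $w$, cancel the jumps using $h(qz)=h(z)$ together with the $q$-Pearson relation $w(qz)=\tau(z)w(z)$, and finish with a Liouville argument from the $O(z^{m})$ growth at infinity. Your unipotent-collapse identity $U(qz)\,\mathrm{diag}(\tau,1)\,U(z)^{-1}=\mathrm{diag}(\tau,1)$ is exactly the paper's assertion that $\Psi_n(z)$ and $\Psi_n(qz)$ satisfy the \emph{same} jump condition across $\widetilde\Gamma$, written multiplicatively, and your expansion $D_n=\mathrm{diag}(q^{n}\tau(z),q^{-n})+O(z^{m-1})$ makes the paper's growth claim explicit (correctly, including the $q^{\pm n}$ factors).

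The one place you diverge is the third paragraph, and it rests on a misconception: by condition (i) of Definition \ref{qRHP}, $Y_n$ is analytic on $\mathbb{C}\setminus\Gamma$ --- its second column is a Cauchy integral over $\Gamma$, so the poles of $h$ live only in the jump matrix, never in $Y_n$ itself. Hence $D_n=Y_n(qz)\,\mathrm{diag}(\tau,1)\,Y_n(z)^{-1}$ has no candidate poles at the points $\pm q^{k}$ and no singularity at $z=0$; its only possible non-analyticity consists of jump discontinuities across $\Gamma\cup\widetilde\Gamma$. The ``residue bookkeeping'' you expect to be the real work is therefore vacuous. The genuine role of the two hypotheses is precisely the one you state at the start of that paragraph and then abandon: admissibility on $\widetilde\Gamma$ and $w(\pm q^{-1})=0$ make $w(s)h(s)$ analytic in the annulus between $\Gamma$ and $\widetilde\Gamma$, so $Y_n$ (hence $\Psi_n$) continues analytically across $\Gamma$ into that annulus, both factors of $D_n$ may be viewed as jumping across the single contour $\widetilde\Gamma$, and your collapse identity then removes that jump; Morera's theorem gives entirety and Liouville finishes. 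With that re-weighting of emphasis, your argument coincides with the paper's proof.
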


\begin{proof}
Define,
\begin{gather} \nonumber
D_{n} = \Psi_{n}(qz)\Psi_{n}^{-1}(z)  \,.
\end{gather}
We note that there may be issues due to zeroes or poles of the weight function $w$. However we find,
\begin{align} \nonumber
D_{n} &= Y_{n}(qz) \begin{bmatrix}
   w(qz)/w(z) &
   0 \\
   0 &
   1
   \end{bmatrix} Y_{n}(z)^{-1}
   \,, \\
   &= Y_{n}(qz) \begin{bmatrix}
   \tau(z) &
   0 \\
   0 &
   1
   \end{bmatrix} Y_{n}(z)^{-1}
   \,.
\end{align}
Therefore, $D_{n}$ is well defined by taking the limit near poles and zeroes of $w(z)$. $\Psi(z)$ satisfies the jump condition,
\begin{gather}\nonumber
 \lim_{z \rightarrow \widetilde{\Gamma}_{+}} \Psi(z)
 =
  \lim_{z \rightarrow \widetilde{\Gamma}_{-}} \Psi(z)
  \begin{bmatrix}
   1 &
   h(s) \\
   0 &
   1 
   \end{bmatrix}, \; s\in \widetilde{\Gamma}.
\end{gather}
Similarly $\Psi(qz)$ satisfies the jump condition,
\begin{gather}\nonumber
 \lim_{z\rightarrow \widetilde{\Gamma}_{+}} \Psi_{n}(qz)
 =
  \lim_{z \rightarrow \widetilde{\Gamma}_{-}} \Psi_{n}(qz)
  \begin{bmatrix}
   1 &
   h(s) \\
   0 &
   1 
   \end{bmatrix}, \; s\in \widetilde{\Gamma},
\end{gather}
as $\Gamma(z) = \widetilde{\Gamma}(qz)$ and $h(qs) = h(s)$. Observing that the jump conditions are equivalent, we can write the RHP for $D_{n}$ as:
\begin{enumerate}
    \item $D_{n}$ is analytic in $\mathbb{C}\setminus \widetilde{\Gamma}$.
    \item $D_{n}^{+}(s) = D_{n}^{-}(s)$ for $s \in \widetilde{\Gamma}$.
    \item $D_{n}$ is of degree $z^{m}$ as $z$ approaches infinity.
\end{enumerate}
Thus $D_{n}$ has polynomial entries of degree at most $m$. By definition of $D_{n}$ we have
\begin{align*}
\Psi_{n}(qz) &= D_{n}(z)\Psi_{n}(z) ,\\
Y_{n}(qz) &= D_{n}(z)Y_{n}(z)\begin{bmatrix}
\tau(z)^{-1} &
0 \\
0 &
1 
\end{bmatrix}.
\end{align*}
Corollary \ref{qlax lemma} follows, as well as Equation \eqref{qlax1}.
\end{proof}
\begin{remark}
    Repeating the above arguments we find that Corollary \ref{qlax lemma} also applies to weights where $w(q^{-1}z)/w(z)$ is a polynomial. An example is discrete $q$-Hermite I polynomials whose weight satisfies $w(q^{-1}z)/w(z) = (1-z^{2})$. Discrete $q$-Hermite I polynomials satisfy the $q$-difference equation
    \begin{gather*} 
    \begin{bmatrix}
       P_{n}(qz) \\
       P_{n-1}(qz)
   \end{bmatrix}
   = \begin{bmatrix}
       1 &
       z(q^{n}-1) \\
       zq^{2-n} &
       1-z^{2}q^{2-n}
   \end{bmatrix}
   \begin{bmatrix}
       P_{n}(z) \\
       P_{n-1}(z)
   \end{bmatrix}.
\end{gather*}
    
\end{remark}

\subsection{Ladder operators}\label{Ladder operator section}
In this section we recover ladder operators for $q$-orthogonal polynomials \cite{chen2008ladder} from the $q$-RHP. The process is similar to that of semi-classical orthogonal polynomials, the derivation of which can be found here \cite[Theorem 4.1]{AsscheBook}. To avoid onerous calculations, we restrict ourselves to determining the lowering ladder operator for $q$-orthogonal polynomials.

\begin{corollary}
Let $\{P_{n}\}_{n=0}^{\infty}$ be a sequence of monic $q$-orthogonal polynomials which satisfy a $q$-RHP with admissible weight $w(z)$ such that $w(\pm q^{-1}) = 0$. Then
\begin{equation*}
    D_{q}P_{n}(z) = A_{n}(z)P_{n-1}(z) - B_{n}(z)P_{n}(z) \,,
\end{equation*}
where
\begin{eqnarray*}
    A_{n}(z) &=& \frac{1}{\gamma_{n-1}}\int^{1}_{-1}\frac{u(y)}{qz-y}P_{n}(y)P_{n}(y/q)w(y)d_{q}y \,,\\
    B_{n}(z) &=& \frac{1}{\gamma_{n-1}}\int^{1}_{-1}\frac{u(y)}{qz-y}P_{n}(y)P_{n-1}(y/q)w(y)d_{q}y \,,
\end{eqnarray*}
and
\begin{equation*}
    u(z) = \frac{D_{q^{-1}}w(z)}{w(z)}.
\end{equation*}
\end{corollary}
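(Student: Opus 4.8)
The plan is to derive the lowering ladder operator by differentiating the Cauchy-transform representation of $Y_{(1,2)}$ and exploiting the $q$-analogue of the Pearson relation encoded in the weight. First I would recall from Theorem~\ref{RHPtheorem} that the orthogonal polynomial $P_n$ is expressible through the $q$-RHP solution, and in particular that the off-diagonal entries are Cauchy transforms of the form $\oint_\Gamma P_n(s)w(s)h(s)/(z-s)\,ds$, which by the residue computation in the proof of Theorem~\ref{RHPtheorem} collapse to Jackson integrals $\int_{-1}^1 P_n(x)w(x)/(z-x)\,d_qx$. The key structural input is that $w(\pm q^{-1})=0$, which (as in Corollary~\ref{qlax lemma}) guarantees that boundary terms vanish when one performs a $q$-integration by parts against $D_q$.

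The central computation is to apply $D_q$ (in the variable $z$) to $P_n(z)$ and rewrite the result using orthogonality. Concretely, I would start from the integral expression for the Cauchy transform and perform a $q$-integration by parts in the integration variable $y$, moving the $q$-derivative from the kernel onto $w(y)$; this is where the logarithmic $q$-derivative $u(y) = D_{q^{-1}}w(z)/w(z)$ enters, since $D_q$ acting on $w$ produces exactly the factor $u$ times $w$. The vanishing of $w$ at $\pm q^{-1}$ ensures the discrete summation-by-parts on the lattice $\{\pm q^k\}$ has no surviving endpoint contributions. After this manipulation one is left with an integral whose integrand contains $P_n(y)$ together with the kernel $u(y)/(qz-y)$; the appearance of $qz$ rather than $z$ in the denominator, and of the shifted argument $y/q$ in $P_n(y/q)$ and $P_{n-1}(y/q)$, is a direct consequence of how $D_q$ interacts with the $q$-lattice and the scaling $h(qz)=h(z)$.

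To produce the two coefficients $A_n$ and $B_n$, I would then expand the resulting kernel in the basis of orthogonal polynomials of degree at most $n-1$ and use orthogonality to isolate the $P_{n-1}$ and $P_n$ components. Since $D_q P_n$ has degree $n-1$, it lies in the span of $\{P_0,\dots,P_{n-1}\}$; projecting onto $P_{n-1}$ and $P_n$ via the inner product, and normalising by $\gamma_{n-1}$, yields precisely the stated formulas for $A_n(z)$ and $B_n(z)$. The products $P_n(y)P_n(y/q)$ and $P_n(y)P_{n-1}(y/q)$ in the two integrands reflect that both the original polynomial and its $q$-shifted partner appear after integration by parts.

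The hard part will be controlling the $q$-integration by parts rigorously: one must verify that shifting $D_q$ onto $w$ inside the Jackson integral is legitimate, that the boundary (endpoint) terms at $\pm q^{-1}$ genuinely vanish under the hypothesis $w(\pm q^{-1})=0$, and that the interchange of the (infinite) Jackson summation with the differentiation and the residue evaluation is justified by the convergence already established in the proof of Theorem~\ref{RHPtheorem}. A secondary subtlety is bookkeeping the argument shifts $y \mapsto y/q$ and $z \mapsto qz$ consistently, so that the final kernel $u(y)/(qz-y)$ emerges with the correct normalisation rather than a spurious power of $q$; I expect the algebra here to be delicate but routine once the integration-by-parts identity is set up correctly.
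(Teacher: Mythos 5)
Your computational core --- differentiate the Cauchy-transform representation, perform a $q$-summation by parts on the Jackson lattice so that the logarithmic $q$-derivative $u=D_{q^{-1}}w/w$ appears, and use $w(\pm q^{-1})=0$ to kill the lattice endpoint terms --- matches what the paper actually does, and you have correctly identified where the hypothesis on $w$ enters. However, your final step has a genuine gap. You claim that since $D_qP_n$ has degree $n-1$ it lies in $\mathrm{span}\{P_0,\dots,P_{n-1}\}$, and that ``projecting onto $P_{n-1}$ and $P_n$ via the inner product'' yields the stated formulas. This cannot work as described: orthogonal projection produces \emph{constant} coefficients $c_k$, $k=0,\dots,n-1$ (and the projection of a degree-$(n-1)$ polynomial onto $P_n$ is identically zero), whereas $A_n(z)$ and $B_n(z)$ in the corollary are genuine functions of $z$ through the kernel $1/(qz-y)$; indeed the combination $A_n(z)P_{n-1}(z)-B_n(z)P_n(z)$ only has degree $n-1$ because the degree-$n$ contributions of the two terms cancel. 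Passing from the full expansion $\sum_{k=0}^{n-1}c_kP_k$ to a two-term combination with $z$-dependent coefficients requires a device your proposal never names: in the classical scalar derivation (the one you are implicitly following, cf.\ Van Assche's Theorem 4.1) this is the Christoffel--Darboux formula
\begin{equation*}
\sum_{k=0}^{n-1}\frac{P_k(z)P_k(y)}{\gamma_k}=\frac{P_n(z)P_{n-1}(y)-P_{n-1}(z)P_n(y)}{\gamma_{n-1}(z-y)}\,.
\end{equation*}

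The paper sidesteps Christoffel--Darboux entirely by exploiting the matrix structure of the RHP: it sets $R=D_q(Y)Y^{-1}$ and computes its entries from the explicit solution of Theorem \ref{RHPtheorem}. Because $\det Y\equiv 1$, the entry $R_{(1,1)}$ is automatically the bilinear combination $\gamma_{n-1}^{-1}\bigl[D_q(P_n)\cdot \Pi_{n-1}-D_q(\Pi_n)\cdot P_{n-1}\bigr]$, where $\Pi_m$ denotes the transform $\int_{-1}^{1}P_m(y)w(y)/(y-z)\,d_qy$; the orthogonality, geometric-series and Jackson-shift manipulations you describe then reduce this to $-B_n(z)$, and reading off the $(1,1)$ entry of $D_qY=RY$ (whose second row contains $\gamma_{n-1}^{-1}P_{n-1}$) gives the two-term ladder relation at once, with $A_n$ coming from $R_{(1,2)}$ by the same method. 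So to complete your argument you must either adopt this matrix computation or insert the Christoffel--Darboux step explicitly; without one of these, your projection step does not produce the claimed form of $D_qP_n$.
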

\begin{proof}
Define $R=D_{q}(Y)Y^{-1}$. Then it follows that
\begin{equation}\label{R11}
   \begin{aligned} -\gamma_{n-1}R_{(1,1)} &= D_{q}(P_{n}(z))\int_{-1}^{1}\frac{P_{n-1}(y)w(y)}{y-z}d_{q}y\\ &\qquad - P_{n-1}(z)D_{q}\left(\int_{-1}^{1}\frac{P_{n}(y)w(y)}{y-z}d_{q}y\right)  \,.
   \end{aligned}
\end{equation}
Using the orthogonality of $P_{n-1}$ we can apply the geometric series, Equation \eqref{exp}, to write
\begin{equation}\label{the above equation}
    D_{q}(P_{n}(z))\int_{-1}^{1}\frac{P_{n-1}(y)w(y)}{y-z}d_{q}y = \int_{-1}^{1}\frac{D_{q}(P_{n}(y))P_{n-1}(y)w(y)}{y-z}d_{q}y \,.
\end{equation} 
By substituting in the definition of the $q$-derivative into the RHS of Equation \eqref{the above equation} we find
\[ D_{q}(P_{n}(z))\int_{-1}^{1}\frac{P_{n-1}(y)w(y)}{y-z}d_{q}y = \int_{-1}^{1}\frac{(P_{n}(qy)-P_{n}(y))P_{n-1}(y)w(y)}{y(q-1)(y-z)}d_{q}y \,.\]
Similarly, we can substitute in the definition of the $q$-derivative to write the last term in Equation \eqref{R11} as,
\begin{multline}\label{multi}
 P_{n-1}(z)\int_{-1}^{1}P_{n}(y)w(y)\left(\frac{1}{z(q-1)(y-qz)} - \frac{1}{z(q-1)(y-z)} \right) d_{q}y \\
 = P_{n-1}(z)\int_{-1}^{1}P_{n}(y)w(y)\left(\frac{q}{y(q-1)(y-qz)} - \frac{1}{y(q-1)(y-z)} \right) d_{q}y\,,   
\end{multline}
where the equality can be verified by direct calculation. Using the orthogonality of $P_{n}$ and the geometric series Equation \eqref{exp} we can write the RHS of Equation \eqref{multi} as 
\begin{multline*}
P_{n-1}(z)\int_{-1}^{1}P_{n}(y)w(y)\left(\frac{q}{y(q-1)(y-qz)} - \frac{1}{y(q-1)(y-z)} \right) d_{q}y \\
 = \int_{-1}^{1}P_{n}(y)w(y)\left(\frac{qP_{n-1}(y/q)}{y(q-1)(y-qz)} - \frac{P_{n-1}(y)}{y(q-1)(y-z)} \right) d_{q}y\,.
\end{multline*}
Equation \eqref{R11} can now be written as,
\begin{equation}\label{exp2}
    -\gamma_{n-1}R_{1,1} = \int_{-1}^{1}\frac{P_{n}(qy)P_{n-1}(y)w(y)}{y(q-1)(y-z)}d_{q}y - q\int_{-1}^{1}\frac{P_{n}(y)P_{n-1}(y/q)w(y)}{y(q-1)(y-qz)}d_{q}y  \,.
\end{equation}
From the definition of the Jackson integral (see Equation \eqref{Jackson}) we can immediately determine the identity,
\begin{equation}\nonumber
    q\int_{-1}^{1}f(qx)d_{q}x =  \int_{-1}^{1}f(x)d_{q}x - (f(1)+f(-1))\,. 
\end{equation} 
We can apply the above identity to Equation \eqref{exp2} to give us,
\begin{eqnarray*}
    -\gamma_{n-1}R_{1,1} &=& q\int_{-1}^{1}\frac{P_{n}(y)P_{n-1}(y/q)w(y/q)}{y(q-1)(y-qz)}d_{q}y - q\int_{-1}^{1}\frac{P_{n}(y)P_{n-1}(y/q)w(y)}{y(q-1)(y-qz)}d_{q}y  \,, \\
    &=& \int_{-1}^{1}\frac{P_{n}(y)P_{n-1}(y/q)Dq^{-1}(w(y))}{(qz-y)}d_{q}y .
\end{eqnarray*}
By definition, $D_{q}(Y) = RY$. Hence, $D_{q}Y_{(1,1)} = R_{(1,1)}Y_{(1,1)} + R_{(1,2)}Y_{(1,2)}$. Thus, we have just proven the form of $B_{n}$. The proof of $A_{n}$ follows a similar argument; one finds $R_{(1,2)}$ using the same method as $R_{(1,1)}$ which gives the desired result.
\end{proof}

\subsection{Proof of Theorem \ref{all zeroes}}\label{Zero section}
We know from Corollary \ref{2.1} that the two columns of the solution $Y_{n}$ to the $q$-RHP both satisfy the same three term recurrence relation, where $a_{n}$ and $b_{n}$ are given by Equation \ref{recurrence}. We will now show that they converge to analytic functions if the recurrence coefficients $a_{n}$ and $b_{n}$ decay sufficiently fast.   
\begin{lemma}\label{convergence}
 Let $y_{n}(z)$ be determined by the recurrence relation
 \[ y_{n+2}(z) -z y_{n+1}(z) + b_{n+1}y_{n+1}(z)+a_{n+1}y_{n}(z) = 0 \,, \]
 where $|z|>0$. If 
\begin{equation}\label{sufficient decay}
    \sum_{n=0}^{\infty} |a_{n}| +|b_{n}| < \infty ,
\end{equation} 
then
\begin{equation}\label{yn limit}
\lim_{n\to\infty}y_{n}(z)z^{-n}\rightarrow f(z),
\end{equation}
where convergence is uniform in any compact domain not containing $z=0$ or poles of $y_{n}$.
\end{lemma}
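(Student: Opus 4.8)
The plan is to recast the second-order recurrence as a first-order matrix recurrence, factor out the dominant growth $z^n$, and show the resulting sequence of normalised vectors is Cauchy using the summability hypothesis \eqref{sufficient decay}. First I would set $\mathbf{y}_n(z) = (y_{n+1}(z), y_n(z))^{T}$ and write $\mathbf{y}_{n+1} = T_n(z)\,\mathbf{y}_n$, where
\[
T_n(z) = \begin{pmatrix} z - b_{n+1} & -a_{n+1} \\ 1 & 0 \end{pmatrix}.
\]
The idea is that $T_n(z) = z\,T_\infty(z) + E_n(z)$, where $T_\infty(z)$ is the limiting transfer matrix with $a_{n+1}, b_{n+1}$ set to zero and $E_n(z)$ is a perturbation whose entries are controlled by $|a_{n+1}| + |b_{n+1}|$. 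The dominant eigenvalue of the unperturbed system is $z$ (with eigenvector $(z,1)^T$, since the companion matrix of $x^2 = zx$ has eigenvalues $z$ and $0$), which motivates the normalisation $z^{-n}$ in \eqref{yn limit}.

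The key steps, in order, are as follows. I would define the normalised vector $\mathbf{u}_n(z) = z^{-n}\mathbf{y}_n(z)$ and derive its recurrence $\mathbf{u}_{n+1} = (z^{-1}T_n(z))\,\mathbf{u}_n$, so that $z^{-1}T_n = \widehat{T}_\infty + z^{-1}E_n$ with $\widehat{T}_\infty$ having a single nonzero eigenvalue $1$. The heart of the argument is a Gronwall-type estimate: writing $\mathbf{u}_{n+1} - \mathbf{u}_n = (z^{-1}T_n - I)\mathbf{u}_n$ and splitting $z^{-1}T_n - I$ into the fixed part $(\widehat{T}_\infty - I)$ and the decaying part $z^{-1}E_n$, I would bound the increments and show that $\sum_n \|\mathbf{u}_{n+1} - \mathbf{u}_n\|$ converges. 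The telescoping sum then gives a Cauchy sequence, hence a limit $\mathbf{u}_\infty(z)$ whose second component is the desired $f(z)$. On a compact set $K$ bounded away from $z=0$ and from poles of the $y_n$, the quantity $|z|^{-1}$ is uniformly bounded, so the product $\prod_n (1 + C|z|^{-1}(|a_{n+1}|+|b_{n+1}|))$ converges uniformly by \eqref{sufficient decay}, yielding uniform convergence and analyticity of $f$ by Weierstrass's theorem.

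The main obstacle is the presence of the $(\widehat{T}_\infty - I)$ term, which does \emph{not} decay in $n$: the naive estimate $\|\mathbf{u}_{n+1} - \mathbf{u}_n\| \le \|\widehat{T}_\infty - I\|\,\|\mathbf{u}_n\| + \dots$ has a non-summable leading contribution. To handle this I would first pass to coordinates adapted to the eigenstructure of $\widehat{T}_\infty$ — that is, project onto the eigenspace for eigenvalue $1$ and its complement — so that in the unperturbed dynamics the relevant component is exactly fixed and only the perturbation $z^{-1}E_n$ drives the increments. An equivalent and perhaps cleaner route is to observe that $y_{n+1}(z)/y_n(z) \to z$ and rewrite $y_n z^{-n} = y_0 \prod_{k=0}^{n-1}\bigl(y_{k+1}/(z\,y_k)\bigr)$, then prove this infinite product converges by showing each factor is $1 + O(|a_k| + |b_k|)$; summability of the logarithms follows from \eqref{sufficient decay}. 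Controlling the factors uniformly away from the poles of $y_n$ (where the ratios degenerate) is exactly why the compact domain in the statement must exclude those poles, and verifying that the exceptional set is harmless — that the zeros of consecutive $y_n$ do not accumulate on $K$ — is the delicate point I would need to address carefully.
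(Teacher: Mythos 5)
Your primary (transfer-matrix) route is correct and is, at its core, the same argument as the paper's, but the paper's packaging is considerably more elementary. The paper substitutes $y_n = z^n u_n$ directly into the scalar recurrence, obtaining
\[
u_{n+2}-u_{n+1}+\frac{b_{n+1}}{z}\,u_{n+1}+\frac{a_{n+1}}{z^2}\,u_n=0 ,
\]
so the ``unperturbed'' dynamics is literally $u_{n+2}=u_{n+1}$ and the increment $u_{n+2}-u_{n+1}$ is bounded by $\left(|a_{n+1}/z^{2}|+|b_{n+1}/z|\right)\max(|u_{n+1}|,|u_n|)$, with no non-decaying term at all. Boundedness of $\max(|u_{n+1}|,|u_n|)$ follows from the telescoping product $\prod_k\left(1+|a_{k+1}/z^{2}|+|b_{k+1}/z|\right)$, which converges by \eqref{sufficient decay}, and summability of the increments then makes $u_n$ uniformly Cauchy on compact sets avoiding $z=0$ and the poles. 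This max-of-two-consecutive-terms trick is exactly the scalar counterpart of your projection onto the eigenvalue-$1$ eigenspace of $\widehat{T}_\infty$: it is what eliminates the non-summable $(\widehat{T}_\infty-I)$ contribution that you correctly identify as the obstacle. Note also that with your normalisation $\mathbf{u}_n=z^{-n}\mathbf{y}_n$ the matrix $z^{-1}T_n$ has a row whose induced sup-norm contribution is $|z|^{-1}>1$ when $|z|<1$, so even the boundedness step (not just the increment step) requires the adapted coordinates; the paper's scalar normalisation sidesteps both issues in two lines.

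Your ``equivalent and perhaps cleaner'' alternative route --- writing $y_nz^{-n}=y_0\prod_k y_{k+1}/(z\,y_k)$ and summing logarithms --- contains a genuine flaw, and you have mislocated it. The factors degenerate at \emph{zeros} of $y_k$, not at poles; the lemma excludes only $z=0$ and poles of $y_n$ from the compact set $K$, so zeros of the $y_n$ are allowed in $K$. Worse, in the application for which this lemma exists, $y_n=P_n$ and the zeros of $P_n$ accumulate at every lattice point $\pm q^k$ --- that is precisely the content of Theorem \ref{all zeroes} --- so your hope of ``verifying that the zeros of consecutive $y_n$ do not accumulate on $K$'' is false whenever $K$ meets the lattice, which is exactly the case of interest (the limit $f$ itself vanishes there). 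No ratio/logarithm argument can be uniform near such points, so this route cannot be repaired and should be discarded in favour of your first one.
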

\begin{proof}
Substituting in $y_{n} = z^{n}u_{n}$ the recurrence relation becomes:
\begin{equation}\label{cauchy}
    u_{n+2} - u_{n+1} +\frac{b_{n+1}}{z}u_{n+1}+ \frac{a_{n+1}}{z^{2}}u_{n} = 0 \,. 
\end{equation}
Taking the norm we have
\[ |u_{n+2}| \leq \left( 1+\left| \frac{a_{n+1}}{z^{2}}\right| + \left| \frac{b_{n+1}}{z} \right| \right)\text{max}(|u_{n+1}|,|u_{n}|) \,. \] 
Using the above relation we can write
\[ \text{max}(|u_{n+2}|,|u_{n+1}|) \leq \left( 1+\left| \frac{a_{n+1}}{z^{2}}\right| + \left| \frac{b_{n+1}}{z} \right| \right)\text{max}(|u_{n+1}|,|u_{n}|) \,. \] 
Thus, it follows
\[ |u_{n}| \leq |u_{0}| \prod_{k=0}^{n}\left( 1+\left| \frac{a_{k+1}}{z^{2}}\right| + \left| \frac{b_{k+1}}{z}\right| \right) \,, \]
by applying a telescopic product. If $a_{n}$ and $b_{n}$ satisfy Equation \eqref{sufficient decay} then the RHS is bounded. Thus, $|u_{n}|$ is bounded in any compact domain not containing $z=0$. Hence, by Equation \eqref{cauchy} $u_{n}$ is a Cauchy sequence which converges as $n$ goes to infinity. Moreover, $u_{n}$ converges uniformly in any compact domain not containing $z=0$.
\end{proof}

We now prove Theorem \ref{all zeroes}. First, observe that the $q$-RHP jump, Equation \eqref{b}, can be written in one-dimension as
\begin{equation}\label{1d cauchy}
    P_{n}hw + \pi_{-}(P_{n}) = \pi_{+}(P_{n}) \,,
\end{equation} 
where $P_{n} = Y^{\pm}_{(1,1)}$, $\pi_{-} = Y^{-}_{(1,2)}$ and $\pi_{+} = Y^{+}_{(1,2)}$. Motivated by the form of Equation \eqref{yn limit}, we multiply through by a common factor $z^{-n}$ to give
\[ z^{-n}P_{n}hw + z^{-n}\pi_{-}(P_{n}) = z^{-n}\pi_{+}(P_{n}) \,.\]
Applying the results of Lemma \ref{convergence}, in the limit $n$ goes to infinity the above becomes
\[ f^{(0)}(z)hw + f^{(1)}(z) = f^{(2)}(z) \,,\]
where $f^{(0)}$ and $f^{(1)}$ are analytic everywhere except $z=0$ and $f^{(2)}$ is analytic everywhere except  $z=0$ and $z=\pm q^{k}$ for $k \in \mathbb{Z}$. The analyticity of $f^{(i)}$ follows from Morera's theorem as convergence is uniform in any compact domain D not containing $0$ or the poles of $Y_{(1,2)}$. \\ \\
Furthermore, we can deduce that $f^{(2)}(z)$ is the zero function. We know that $f^{(2)}(z)$ is analytic at infinity. We also know $z^{-n}\pi_{+}(P_{n})$ decays as $z^{-2n}\gamma_{n}^{-1}$ as $z\rightarrow \infty$. Hence, $z^{m}f^{(2)}(z)$ goes to zero for any $m$. By Liouville's theorem we conclude $f^{(2)}(z)$ is the zero function.
Thus,
\[ f^{(0)}(z)hw + f^{(1)}(z) = 0 \,,\]
If $f^{(0)}(z)$ is non-zero at the poles of $h(z)$ then by the above equation $f^{(1)}(z)$ must have a pole there. But $f^{(1)}(z)$ is analytic for $z\neq0$ which is a contradiction. Hence, $f^{(0)}(z)$ must have zeroes at the poles of $h(z)$, which are at $z=\pm q^{k}$ for $k \in \mathbb{Z}$. \\ \\
By definition 
\[ f^{(0)}(z) = \lim_{n\to\infty}z^{-n}P_{n}(z) = \lim_{n\to\infty}\prod_{i=1}^{n}\left(1-\frac{z_{i,n}}{z}\right) \,, \]
where $z_{i,n}$ are the ordered zeroes of $P_{n}$. Thus, the zeroes of $f^{(0)}(z)$ are the zeroes of $\lim_{n\to\infty}P_{n}(z)$. Which we have shown to have zeros at $z=\pm q^{k}$ for $k \in \mathbb{Z}$.\\ \\

We now show that Theorem \ref{all zeroes} extends to $q$-orthogonal polynomials with strictly positive admissible weights by showing that $a_n$ and $b_n$ decay sufficiently fast for such polynomials. We start with a well known result on the zeroes of discrete orthogonal polynomials \cite[Chapter 1, Theorem 2.4]{freud2014orthogonal}.
\begin{lemma}\label{z spacing}
 The zeroes of $q$-orthogonal polynomials, with strictly positive weight $w(x)d_{q}x$, are bounded by the maximum and minimum of the $q$-lattice (i.e. between 0 and 1, or, -1 and 1 depending on the Jackson integral chosen) and do not appear more than once between lattice points.
\end{lemma}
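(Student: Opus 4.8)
The plan is to treat the Jackson integral as a genuine positive-definite inner product $\langle f,g\rangle := \int_{-1}^{1} f\,g\,w\,d_q x$ on polynomials, and then run the classical sign-change (Gaussian quadrature) argument for orthogonal polynomials with respect to a positive measure, adapting it to the discreteness of the $q$-lattice. Strict positivity of the admissible weight at the mass points $\pm q^{k}$, together with the positive Jackson weights $q^{k}$, guarantees that $\langle P,P\rangle>0$ for every nonzero polynomial $P$: there are infinitely many mass points, so no nonzero polynomial can vanish at all of them. Hence the $P_n$ are well defined and satisfy the stated orthogonality, and I may freely use $\langle P_n,R\rangle=0$ whenever $\deg R<n$.

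First I would establish the localisation claim. Let $t_{1}<\cdots<t_{m}$ be the real points in the open convex hull of the support at which $P_{n}$ changes sign, and suppose $m<n$. Setting $Q(x)=\prod_{j=1}^{m}(x-t_{j})$, the product $P_{n}Q$ has no sign changes along the real axis inside the convex hull, since every odd-order sign change of $P_n$ there is cancelled by a factor of $Q$; consequently $P_{n}Q$ has one constant sign at every mass point. Because $P_{n}$ has at most $n$ zeroes, it is nonzero at infinitely many mass points, so $\langle P_{n},Q\rangle$ is a sum of same-signed terms, not all zero, and is therefore nonzero. This contradicts orthogonality, as $\deg Q=m<n$ forces $\langle P_{n},Q\rangle=0$. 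Hence $m=n$: $P_{n}$ has $n$ distinct simple real zeroes, all lying strictly inside the convex hull of the support, that is in $(-1,1)$ for the symmetric lattice (respectively $(0,1)$ for the one-sided lattice), which is the boundedness statement.

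For the separation claim I would argue by contradiction, exploiting discreteness. Suppose two (necessarily simple) zeroes $x_{i}<x_{i+1}$ of $P_{n}$ lie in the same gap, so that no lattice point lies in $[x_{i},x_{i+1}]$. Since $x_{i},x_{i+1}$ are simple zeroes, $R(x)=P_{n}(x)/\bigl((x-x_{i})(x-x_{i+1})\bigr)$ is a polynomial of degree $n-2$, and
\[
\langle P_{n},R\rangle=\int_{-1}^{1}\frac{P_{n}(x)^{2}}{(x-x_{i})(x-x_{i+1})}\,w(x)\,d_q x .
\]
At every mass point $x$ the factor $(x-x_{i})(x-x_{i+1})$ is strictly positive, because every mass point lies outside the gap, while $P_{n}(x)^{2}\ge 0$ and $w(x)>0$; the integrand is thus nonnegative and strictly positive at the infinitely many mass points where $P_{n}$ does not vanish, giving $\langle P_{n},R\rangle>0$. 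But $\deg R=n-2<n$ forces $\langle P_{n},R\rangle=0$, a contradiction. Hence each open gap between consecutive lattice points contains at most one zero.

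The routine parts are verifying that $P_{n}Q$ and the above integrand have the asserted signs and that the Jackson sums converge (the latter following from admissibility and the decay of $w$). The only genuinely delicate point is the role of the accumulation point at $0$: I would note that the empty gaps are exactly the intervals $(q^{k+1},q^{k})$ together with their reflections, that $0$ is never itself a lattice point, and that a zero sitting at $0$ (as occurs for odd $n$ on the symmetric lattice) is consistent with both claims. One must check only that the constant-sign and divisibility steps are unaffected by the clustering of mass points near $0$, which holds because each argument involves only the finitely many zeroes of a single $P_n$.
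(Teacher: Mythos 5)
Your proof is correct, and its core mechanism --- contradiction via orthogonality against a lower-degree polynomial built from zeroes of $P_n$, using that the integrand then has constant sign on the mass points --- is the same as the paper's. Your treatment of the separation claim is essentially identical to the paper's: they integrate $P_n(x)^2 w(x)/\bigl((x-x_{i_1})(x-x_{i_2})\bigr)$, written as $\prod_{j\neq i_1,i_2}(x-x_{j,n})$ times $P_n$, and note the sum of same-signed terms cannot vanish. Where you genuinely diverge is the localisation claim: the paper assumes a (tacitly real, ordered) zero lies outside the lattice hull and divides out that single factor, whereas you run the classical sign-change counting argument with $Q(x)=\prod_{j=1}^m(x-t_j)$ and conclude $m=n$. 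Your route buys something the paper's does not: it proves in one stroke that all $n$ zeroes are real, simple, and strictly inside the convex hull, facts the paper's argument implicitly presupposes (it orders the zeroes on the real line) and only patches afterwards in a remark ("omitting any complex conjugate roots if required"). You also make explicit the positive-definiteness of the Jackson inner product and the role of the accumulation point at $0$, which the paper leaves unstated. The price is length; the paper's version is shorter precisely because it defers reality and simplicity to that remark.
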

\begin{proof}
The proof follows by contradiction. Let $\{x_{i,n}\}_{i=1}^{n}$ denote the ordered zeroes of an orthogonal polynomial, $P_{n}$, of degree $n$ (such that $x_{n,n}<x_{n-1,n}<...<x_{1,n}$). Assume that for some $i_{ext}$, $x_{i_{ext},n}$ lies outside the maximum and minimum of the $q$-lattice. It follows that 
\[ \int \left( \prod_{j=1, j\neq i_{ext}}^{n} (x-x_{j,n}) \right) \left( \prod_{i=1}^{n} (x-x_{i,n}) \right) w(x)d_{q}x > 0 \,,\]
where the polynomial on the left is degree $n-1$, while the polynomial on the right is degree $n$. This contradicts the orthogonality condition. Thus, the zeroes lie between the maximum and minimum of the $q$-lattice.\\ \\
Now consider the second part of the lemma. Assume that there exists two zeroes, $x_{i_{1},n}$ and $x_{i_{2},n}$ which lie between two consecutive points (i.e. $q^{k},q^{k-1}$) of the Jackson integral. It follows that
\[ \int \left( \prod_{j=1, j\neq i_{1},i_{2}}^{n} (x-x_{j,n}) \right) \left( \prod_{i=1}^{n} (x-x_{i,n}) \right) w(x)d_{q}x > 0 \,.\]
This is again a contradiction to orthogonality.
\end{proof}
\begin{remark}
    Note that the roots of $q$-orthogonal polynomials are real. This well known observation follows from the fact that $P_{n}$ must have real coefficients (from the recurrence relation Equation \eqref{recurrence}), combined with Lemma \ref{z spacing}. (To see this we use the same arguments as in the proof of Lemma \ref{z spacing}, omitting any complex conjugate roots if required.) It also follows that the roots must be simple (again by using the same arguments in the proof of Lemma \ref{z spacing} with the omission of any multiple roots if required).
\end{remark}

We now use Lemma \ref{z spacing} to prove that the sum of the recurrence coefficients in Equation \eqref{recurrence} is bounded.
\begin{lemma}\label{2.3}
    Given a sequence of $q$-orthogonal polynomials on $(0,1]$, with a strictly positive admissible weight, the recurrence coefficients $a_{n}$ and $b_{n}$ defined in Equation \eqref{recurrence} satisfy
    \begin{eqnarray}
        \sum_{n=0}^{\infty} b_{n} &\leq& \frac{1}{1-q} < \infty, \label{bn}\\
        \sum_{n=0}^{\infty} a_{n} &\leq& \frac{2}{(1-q)^{2}} < \infty.\label{an}
    \end{eqnarray}
\end{lemma}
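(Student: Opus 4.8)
The plan is to exploit the standard correspondence between the recurrence coefficients and the spectrum of the associated Jacobi (tridiagonal) matrix, and then to feed the zero-localisation of Lemma \ref{z spacing} into the resulting power-sum identities. First I would record two elementary positivity facts: since $w(x)>0$ and $x>0$ on $(0,1]$, every $b_{n}=\int_{0}^{1}xP_{n}^{2}w\,d_{q}x/\gamma_{n}>0$, and since each $\gamma_{n}>0$ we have $a_{n}=\gamma_{n}/\gamma_{n-1}>0$. Hence both series have monotone increasing partial sums, so it suffices to bound $\sum_{n=0}^{N-1}b_{n}$ and $\sum_{n=1}^{N-1}a_{n}$ uniformly in $N$ and then let $N\to\infty$.

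Next I would introduce the $N\times N$ truncated Jacobi matrix $J_{N}$ determined by \eqref{recurrence}, with diagonal $(b_{0},\dots,b_{N-1})$, superdiagonal entries $1$ and subdiagonal entries $(a_{1},\dots,a_{N-1})$; its eigenvalues are exactly the $N$ zeroes $\{x_{j,N}\}$ of $P_{N}$. Taking traces of $J_{N}$ and $J_{N}^{2}$ yields the two power-sum identities
\[ \sum_{n=0}^{N-1}b_{n}=\sum_{j=1}^{N}x_{j,N}, \qquad \sum_{n=0}^{N-1}b_{n}^{2}+2\sum_{n=1}^{N-1}a_{n}=\sum_{j=1}^{N}x_{j,N}^{2}. \]
Discarding the nonnegative term $\sum_{n}b_{n}^{2}$ then gives $2\sum_{n=1}^{N-1}a_{n}\le \sum_{j}x_{j,N}^{2}$.

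Finally I would bound the two power sums of the zeroes using Lemma \ref{z spacing}. Because every zero lies in $(0,1]$ and no two zeroes share a gap $(q^{k},q^{k-1})$, each occupied gap contributes at most its right endpoint $q^{k-1}$, so $\sum_{j}x_{j,N}\le\sum_{k=1}^{\infty}q^{k-1}=\tfrac{1}{1-q}$ and $\sum_{j}x_{j,N}^{2}\le\sum_{k=1}^{\infty}q^{2(k-1)}=\tfrac{1}{1-q^{2}}$, both uniformly in $N$. Combining with the trace identities gives \eqref{bn} directly, and $\sum_{n}a_{n}\le\tfrac{1}{2(1-q^{2})}\le\tfrac{2}{(1-q)^{2}}$, which is \eqref{an} (the stated constant is deliberately generous).

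I expect the main obstacle to be bookkeeping rather than conceptual: getting the boundary terms of $\mathrm{tr}(J_{N}^{2})$ right (the truncation drops the $a_{N}$ contribution, which is precisely what produces the clean factor $2\sum_{n=1}^{N-1}a_{n}$), and verifying that the at-most-one-zero-per-gap statement really does translate into the geometric bounds uniformly in the degree $N$. The positivity of $a_{n},b_{n}$ and the uniformity of these estimates together justify passing to the limit $N\to\infty$.
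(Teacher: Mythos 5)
Your proposal is correct, but it reaches the bounds by a genuinely different mechanism than the paper. The paper never introduces the Jacobi matrix: it expands $P_{n}(z)=z^{n}+\alpha_{n}z^{n-1}+\beta_{n}z^{n-2}+\cdots$, reads off from \eqref{recurrence} the coefficient recursions $\alpha_{n}=\alpha_{n+1}+b_{n}$ and $\beta_{n}=\beta_{n+1}+b_{n}\alpha_{n}+a_{n}$, telescopes these, and bounds $-\alpha_{n}$ (the sum of the zeroes) and $\beta_{n}$ (the second elementary symmetric function of the zeroes) via Lemma \ref{z spacing}. In other words, the paper controls the elementary symmetric functions $e_{1},e_{2}$ of the zeroes through the subleading coefficients of $P_{n}$, whereas you control the power sums $p_{1},p_{2}$ through $\mathrm{tr}(J_{N})$ and $\mathrm{tr}(J_{N}^{2})$; the two are linked by Newton's identity $p_{2}=e_{1}^{2}-2e_{2}$, and both arguments rest on exactly the same inputs, namely positivity of $a_{n},b_{n}$ and the localisation $x_{k,N}\le q^{k-1}$ extracted from Lemma \ref{z spacing}. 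For \eqref{bn} the two routes coincide (since $e_{1}=p_{1}$, your trace identity is literally the paper's telescoped $\alpha$-recursion), but for \eqref{an} your route is tidier: discarding $\sum_{n}b_{n}^{2}\ge 0$ in $\mathrm{tr}(J_{N}^{2})$ avoids the cross term $b_{n}\alpha_{n}$ that the paper must estimate separately, and it yields the sharper constant $\sum_{n\ge 1}a_{n}\le 1/(2(1-q^{2}))$, comfortably inside the stated $2/(1-q)^{2}$. Two minor points to tighten: state explicitly the standard fact $P_{N}(x)=\det(xI-J_{N})$ (an induction on the determinant expansion), which is what licenses identifying eigenvalues with zeroes; and note that your step ``each occupied gap contributes at most its right endpoint'' tacitly rules out a zero sitting exactly at a lattice point alongside a zero in an adjacent gap --- Lemma \ref{z spacing} as literally stated only excludes two zeroes strictly between consecutive lattice points. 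This last gap is shared with the paper's own proof (which asserts $|x_{k,n}|<|q^{k-1}|$ with the same silent strengthening), and it is repaired by the same orthogonality argument as in Lemma \ref{z spacing} applied to closed gaps, so it is an imprecision rather than a flaw peculiar to your approach.
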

\begin{proof}
Let 
\[ P_{n}(z) = z^{n} + \alpha_{n}z^{n-1} + \beta_{n}z^{n-2} + ... \,. \]
Applying Equation \eqref{recurrence} we find that
\begin{eqnarray}
    \alpha_{n} &=& \alpha_{n+1} + b_{n} , \label{c1}\\
    \beta_{n} &=& \beta_{n+1} + b_{n}\alpha_{n} + a_{n} . \label{c2}
\end{eqnarray}
By definition $a_{n}$ and $b_{n}$ are both positive (see Equation \eqref{recurrence}). We know that $-\alpha_{n}$ is equal to the sum of the roots of $P_{n}$. Furthermore, by Lemma \ref{z spacing}, $|x_{k,n}|$ is smaller then $|q^{k-1}|$. Thus, $\alpha_{n}$ is bounded by
\[ 0 < -\alpha_{n} <\frac{1}{1-q} .\]
Hence taking the telescopic sum of Equation \eqref{c1} gives
\[ \alpha_{n+1} = -\sum_{i=1}^{n}b_{i} ,\]
which implies Equation \eqref{bn}. \\ \\
Similar arguments show that Equation \eqref{c2} implies Equation \eqref{an}.
\end{proof}

\begin{remark}\label{pos remark}
    Lemma \ref{2.3} also applies to $q$-orthogonal polynomials on $[-1,0)\cup(0,1]$, with a strictly positive, symmetric, admissible weight. It follows that we can estimate an upper bound on the rate of convergence in Lemma \ref{convergence} for $q$-orthogonal polynomials with strictly positive admissible weights; $|u_{n}(z)-f(z)|\propto q^{-n}\left(2\left((1-q)z\right)^{-2}+\left( (1-q)z\right)^{-1}\right)$.
\end{remark}

By applying Lemmas \ref{z spacing} and \ref{2.3} we can deduce that Theorem \ref{all zeroes} holds for $q$-orthogonal polynomials on $(0,1]$ with strictly positive admissible weights and $q$-orthogonal polynomials on $(-1,0]\cup(0,1]$ with strictly positive, symmetric, admissible weights.\\ \\
We prove a resulting corollary which states that although the zeroes of $P_{n}$ approach $q^{k}$ as $n$ approaches infinity, they can never actually equal $q^{k}$ for any sufficiently large $n$. Note that this appears to contradict Theorem 6.1.1. in \cite{szeg1939orthogonal}, possibly due to typographical errors in the published version. 
\begin{corollary}\label{approach q but not equal}
 Let $\{P_{n}\}_{n=0}^{\infty}$ be a sequence of $q$-orthogonal polynomials with strictly positive, symmetric, admissible weight on the $q$-lattice $[-1,0)\cup(0,1]$ which satisfy Theorem \ref{all zeroes}. Consider an arbitrary point on the $q$ lattice, $q^{k}$, $k\in \mathbb{N}$. Then there exists an $N_{k}$, which is a function of $k \in \mathbb{N}$, such that the zeroes of $P_{n}$ for $n>N_{k}$ cannot equal $q^{k}$.
\end{corollary}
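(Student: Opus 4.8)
The plan is to argue by contradiction, exploiting the orthogonality structure on the $q$-lattice together with the exact location of the lattice points. Suppose, contrary to the claim, that for a fixed $k\in\mathbb N$ there is an infinite subsequence of indices $n$ for which $P_n(q^k)=0$. By the symmetry of the weight, $P_n$ is an even or odd polynomial, so $P_n(-q^k)=0$ as well along this subsequence. The idea is to show that a zero sitting \emph{exactly} at the lattice point $q^k$ forces a degeneracy in the Jackson-integral orthogonality relation, because the point $q^k$ is itself one of the nodes of the Jackson sum.

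The key step I would carry out is the following. The Jackson integral $\int_{-1}^1 f(x)\,d_qx = (1-q)\sum_{m\ge 0}\bigl(f(q^m)+f(-q^m)\bigr)q^m$ is a weighted sum over the nodes $\pm q^m$. If $P_n(\pm q^k)=0$, then the node $\pm q^k$ contributes nothing to any integral of the form $\int_{-1}^1 P_n(x)g(x)w(x)\,d_qx$. Following the contradiction template already used in Lemma \ref{z spacing}, I would build the auxiliary polynomial
\[
R(x)=\frac{1}{x^2-q^{2k}}\prod_{i=1}^n (x-x_{i,n}),
\]
which is a genuine polynomial of degree $n-2$ (the factor $x^2-q^{2k}$ divides $P_n$ since both $\pm q^k$ are simple zeroes, by the simplicity of the roots noted after Lemma \ref{z spacing}). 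Then consider
\[
\int_{-1}^1 R(x)\,P_n(x)\,w(x)\,d_qx
=(1-q)\sum_{m\ge 0}\bigl(R(q^m)P_n(q^m)+R(-q^m)P_n(-q^m)\bigr)q^m\, w(\cdot).
\]
On the one hand, orthogonality of $P_n$ against all polynomials of degree $\le n-1$ forces this integral to vanish. On the other hand, I would show the summand has a fixed sign at every \emph{surviving} node: the product $R(x)P_n(x)=(x^2-q^{2k})^{-1}\prod_i(x-x_{i,n})^2$ is nonnegative except precisely at the deleted node $\pm q^k$ (where $x^2-q^{2k}$ changes sign), and the term at $\pm q^k$ drops out of the sum because $P_n(\pm q^k)=0$. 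Hence the Jackson sum is strictly positive, contradicting its vanishing.

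The main obstacle I anticipate is making the sign bookkeeping around the deleted node fully rigorous: the factor $(x^2-q^{2k})^{-1}$ is negative for $q^{k}<|x|<q^{k-1}$ and positive elsewhere, so $R(x)P_n(x)$ is \emph{not} of one sign across the whole lattice, and one must check that the only nodes where it is negative are isolated and controllable. The cleanest fix is to incorporate an extra sign-correcting factor, replacing $R$ by $\widetilde R(x)=(x^2-q^{2(k-1)})R(x)$ or a comparable degree-adjusted multiplier chosen so that $\widetilde R(x)P_n(x)w(x)\ge 0$ at every node $\pm q^m$ with $m\ne k$, while keeping $\deg\widetilde R\le n-1$; Lemma \ref{z spacing}, which guarantees at most one zero of $P_n$ between consecutive lattice points, is exactly what lets us control the signs of $\prod_i(x-x_{i,n})$ node-by-node. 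Once the multiplier is fixed so the surviving summands are all of one strict sign, the contradiction with orthogonality is immediate, and since the argument uses only the fixed point $q^k$ together with the simplicity and lattice-separation of the zeroes, it yields the existence of the threshold $N_k$ beyond which no zero of $P_n$ can coincide with $q^k$.
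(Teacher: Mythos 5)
There is a genuine gap, and it is structural rather than cosmetic: your argument never uses that $n$ is large, yet the statement is intrinsically asymptotic. If your sign-and-orthogonality contradiction worked, it would show that $P_n(q^k)\neq 0$ for \emph{every} $n$, which is false. For instance, for a symmetric weight $P_2(x)=x^2-\mu_2/\mu_0$ with $\mu_j=\int_{-1}^{1}x^jw(x)\,d_qx$, and by interpolating within a family of even, entire, strictly positive weights (e.g.\ $w_t(x)=(1-t)+t\,e^{-M(x^2-q^{2(k+1)})^2}$, which pushes $\mu_2/\mu_0$ below $q^{2k}$, against a similar weight concentrating near $\pm q^{k-1}$, which pushes it above) one can arrange $\mu_2/\mu_0=q^{2k}$ exactly, so $P_2(q^k)=0$ for a legitimate strictly positive admissible symmetric weight. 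Hence no $n$-independent argument can succeed; the threshold $N_k$ must enter through quantitative information that is only available for large $n$. This also shows your ``cleanest fix'' cannot exist: a multiplier $\widetilde R$ with $\deg\widetilde R\le n-1$ and $\widetilde R(x)P_n(x)w(x)\ge 0$ at all surviving nodes (strict somewhere) would contradict orthogonality outright, which the $n=2$ example shows is impossible in general. Concretely, the sign bookkeeping is also off: $(x^2-q^{2k})^{-1}$ is negative precisely for $|x|<q^k$, i.e.\ at the \emph{infinitely many} nodes $\pm q^m$ with $m>k$, not on the annulus $q^k<|x|<q^{k-1}$ (which contains no nodes at all). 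Your candidate $\widetilde R(x)=(x^2-q^{2(k-1)})P_n(x)/(x^2-q^{2k})$ does make every surviving summand nonnegative, but it is monic of degree $n$, so orthogonality no longer forces the integral to vanish; instead $\int_{-1}^{1}\widetilde R\,P_n\,w\,d_qx=\gamma_n>0$, which is perfectly consistent, and no contradiction results.

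The paper's proof uses exactly the missing large-$n$ ingredient. It observes that if $P_n(q^k)=0$ then (since the nodes $\pm q^k$ contribute nothing to any Jackson sum against $P_n$) the same $P_n$ is also the $n$th orthogonal polynomial for the lattice with $\pm q^k$ deleted. Theorem \ref{all zeroes} together with the quantitative convergence of Lemma \ref{convergence} and Remark \ref{pos remark} then gives, for $n>N_k$,
\[
\left|z^{-n}P_{n}(z) - \prod_{j=0,\,j\neq k}^{\infty}\bigl(1-z^{-2}q^{2j}\bigr)\right| < \left| \prod_{j=0,\,j\neq k}^{\infty}\bigl(1-z^{-2}q^{2j}\bigr)\right|,
\]
and evaluating at $z=q^k$, where the left-hand side equals the right-hand side because $P_n(q^k)=0$ while the deleted product is nonzero there, yields the contradiction. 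The threshold $N_k$ is precisely the point at which the convergence rate beats the size of the deleted product at $z=q^k$. If you want to salvage your approach, you would need to inject comparable asymptotic information (e.g.\ on the distribution of the zeroes of $P_n$ for large $n$) into the choice of multiplier; orthogonality and node-by-node sign analysis alone cannot do it.
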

\begin{proof}
      Let $P_{n}(z)$ be a monic polynomial which satisfies a $q$-RHP on $[-1,0)\cup(0,1]$ with strictly positive, symmetric, admissible weight.  Assume that for a given $k$, $P_{n}(q^{k}) = 0$. As $P_{n}$ has a zero at $ q^{k}$ it is also the $n$th degree orthogonal polynomial for the weight $w(x)d_{q}x\setminus \pm q^{k}$ i.e. the Jackson integral minus the location $\pm q^{k}$. Applying Remark \ref{pos remark} we know that there exists and $N_{k}$ such that for $n>N_k$ 
      \[ \left|z^{-n}P_{n} - \prod_{j=0,j\neq k}^{\infty}(1-z^{-2}q^{2j})\right| < \left| \prod_{j=0,j\neq k}^{\infty}(1-z^{-2}q^{2j})\right| .\]
      Thus, if $n>N_{k}$ we arrive at a contradiction.
\end{proof}

\section{Conclusion}\label{conc sec}
In this paper, we provided a framework for studying properties of $q$-orthogonal polynomials as solutions of $q$-RHPs. This framework enabled us to deduce certain known properties as well as new results for $q$-orthogonal polynomials. In particular, we deduced previously known ladder operators and provided a Lax pair. Furthermore, in Theorem \ref{all zeroes} we described the asymptotic location of the zeroes of $q$-orthogonal polynomials. Although, this behaviour has been shown earlier for positive discrete weights \cite[Theorem 6.1]{szeg1939orthogonal}, we prove it under more relaxed constraints in the $q$-discrete setting.

It is interesting to note that one can determine the asymptotic location of zeroes of $q$-orthogonal polynomials using the discrete nature of the weight function for $q$-orthogonal polynomials, which manifests itself as a lack of analyticity in the RHP jump function. A similar result does not hold using the RHP for semi-classical orthogonal polynomials, particularly if the orthogonality weight is non-positive. However, the RHP for both semi-classical and $q$-orthogonal polynomials can be used to prove the form of the ladder operators and Lax pairs they respectively satisfy. 

An interesting avenue for future exploration of the $q$-RHP is to determine a form of strong asymptotics in the same vein as the previous work of Deift \textit{et al.} \cite{Deift1999strong} for semi-classical orthogonal polynomials. We note that there has been some work done in this area concerning discrete polynomials \cite{baik2007discrete}. However, these results do not seem very applicable to the $q$-orthogonal polynomials discussed in this paper. In particular, they do not accurately describe the exponentially vanishing behaviour of the recurrence coefficients. It is hoped that the $q$-RHP representation will provide a good framework to explore the strong asymptotics of $q$-orthogonal polynomials.

\enlargethispage{20pt}

\ethics{There are no ethics issues.}

\dataccess{This article has no additional data.}

\aucontribute{All the authors have contributed equally to all sections. Nalini Joshi's ORCID ID is \orcidauthorA. Tomas Lasic Latimer's ORCID ID is \orcidauthorB}

\competing{There are no competing interests.}

\funding{Nalini Joshi's research was supported by an Australian Research Council Discovery Projects \#DP200100210 and \#DP210100129. Tomas Lasic Latimer's research was supported the Australian Government Research Training Program and by the University of Sydney Postgraduate Research Supplementary Scholarship in Integrable Systems.}

\disclaimer{The ideas presented here represent our own ideas together with our understanding of those of other researchers.}

%%%%%%%%%% Insert bibliography here %%%%%%%%%%%%%%

\bibliography{References}

\end{document}